\theoremstyle{plain} 
\newtheorem{lemma}[equation]{Lemma} 
\newtheorem{theorem}[equation]{Theorem} 
\newtheorem{conjecture}[equation]{Conjecture}
\newtheorem{question}[equation]{Question}
\theoremstyle{definition}
\newtheorem{definition}[equation]{Definition} 
\newtheorem{notation}[equation]{Notation}
\theoremstyle{remark}
\newtheorem{remark}[equation]{Remark}
\numberwithin{equation}{section}
\title[Hilbert Transform, Coronas and Energy]{The Two Weight Inequality for Hilbert Transform, Coronas, and Energy Conditions}
 \subjclass[2000]{Primary: 42B20 Secondary: 42B25, 42B35}
 \keywords{weights, Hilbert transform, corona decomposition}
\author[M.T. Lacey]{Michael T. Lacey}
\address{School of Mathematics \\
Georgia Institute of Technology \\
Atlanta GA 30332 }
\email{lacey@math.gatech.edu}
\thanks{Research supported in part by grant NSF-DMS 0968499.}
\author[E.T. Sawyer]{Eric T. Sawyer}
\address{ Department of Mathematics \& Statistics, McMaster University, 1280
Main Street West, Hamilton, Ontario, Canada L8S 4K1 }
\email{sawyer@mcmaster.ca}
\thanks{Research supported in part by NSERC}
\author[C.-Y. Shen]{Chun-Yen Shen}
\address{Department of Mathematics \& Statistics, McMaster University, 1280
Main Street West, Hamilton, Ontario, Canada L8S 4K1 }
\email{cyshen@mcmaster.ca}
\author[I. Uriarte-Tuero]{Ignacio Uriarte-Tuero}
\address{ Department of Mathematics \\
Michigan State University \\
East Lansing MI }
\email{ignacio@math.msu.edu}
\thanks{Research supported in part by the NSF, through grant DMS-0901524.}
\date{}
\begin{document}

\begin{abstract}
We consider the two weight problem for the Hilbert transform, namely the question of finding real-variable 
characterization of those pair of weights for which the Hilbert transform acts boundedly on $ L ^2 $ of the weights. 
Such a characterization is known subject to certain side conditions.  We give a new proof, simpler in many details,
of the best such result. 
In addition, we analyze underlying assumptions in  the proof,  especially in terms of   two alternate side conditions. 
A new characterization in the case of one doubling weight is given.  
\end{abstract}

\maketitle
\setcounter{tocdepth}{1}
\tableofcontents 

\section{Introduction} 

A \emph{weight} is a non-negative Borel measure.
We are interested in the two weight question for the Hilbert transform:   
For two weights $ (\sigma , w )$,  characterize the $ L ^2 $ inequality 
\begin{equation}\label{e.H2}
\lVert H (\sigma f )\rVert_{ L ^2 (w )} \le \mathbf B \lVert f \rVert_{ L ^2 (\sigma )} \,. 
\end{equation}
Here, the inequality is understood in the sense that there is a uniform bound on the operator norm of a standard truncation 
on the singular integral kernel.  Throughout, we will write $ H _{\sigma } f = H (\sigma f)$, and understand at all times that  some truncation is in place.  The inequality  above is in its self-dual formulation: Interchange the roles of $ w$ and $ \sigma $ to get the dual inequality. 
We are also focused on $ L ^2 $ inequalities, so throughout we use the abbreviation $ \lVert f\rVert_{\sigma } := \lVert f\rVert_{ L ^2 (\sigma )}$. This conjecture, due to Nazarov-Treil-Volberg \cite{V}, has been the focus of attention. 

\begin{conjecture}\label{j.2wtH}  For a pair of weights $ (w ,\sigma )$ we have the 
	inequality \eqref{e.H2} if and only if these three  constants are finite. 
\begin{gather}\label{e.A2}
  \mathbf A_2:= \sup _{x \in \mathbb R } \sup _{t>0}  \mathsf P w  (x,t) \mathsf P \sigma  (x,t) 
\,, 
\\ \label{e.H1}
\mathbf H ^2 := \sup _{I} \sigma (I) ^{-1} 
\int _{I} \lvert H (\sigma \mathbf 1_{I})\rvert ^2 \; w (dx) \,, 
\\ \label{e.H*1} 
 \mathbf H _{\ast } ^2 := 
\sup _{I} \sigma (I) ^{-1} 
\int _{I} \lvert H (w  \mathbf 1_{I})\rvert ^2 \; \sigma  (dx)   \,, 
\end{gather}
where in the first line,  $  \mathsf P w  (x,t)$ denotes the Poisson extension of $ w $ 
to the upper half plane.  In particular, the first line is an extension of the classical $ A_2$ condition, 
and  is referred to herein as the $ A_2$ condition.  
The next  two conditions are  dual to one another,  and  are 
referred to as the \emph{testing conditions.}
\end{conjecture}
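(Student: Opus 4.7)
The plan is to proceed by first noting that necessity of the three constants is easy: $\mathbf H$ and $\mathbf H_\ast$ follow by testing \eqref{e.H2} against indicators $\mathbf 1_I$ (and dualizing), while $\mathbf A_2$ follows from the standard off-diagonal lower bound $\lvert H(\sigma\mathbf 1_I)(x)\rvert \gtrsim \sigma(I)/(\lvert I\rvert+\lvert x-c_I\rvert)$, fed back into \eqref{e.H2}.  The bulk of the work is sufficiency.  For that, I would follow the Nazarov--Treil--Volberg dyadic program: introduce two independent random dyadic grids (one adapted to each weight), apply the good/bad cube reduction so that after taking expectation only good Haar coefficients carry the estimate, and then expand the bilinear form as
\begin{equation*}
\langle H_\sigma f,g\rangle_w \;=\; \sum_{I,J} \langle f, h_I^\sigma\rangle_\sigma\,\langle g, h_J^w\rangle_w\, \langle H_\sigma h_I^\sigma, h_J^w\rangle_w.
\end{equation*}

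Next I would split this double sum by the relative size and location of $(I,J)$.  Pairs that are far apart in location or separated by many generations are absorbed by $\mathbf A_2$ together with standard Poisson-tail estimates on the Hilbert kernel.  For deeply nested pairs I would introduce a Calder\'on--Zygmund corona decomposition of $f$ with respect to $\sigma$, with stopping generations determined by doubling of the martingale averages.  Within each corona the interaction decomposes as a paraproduct plus a ``stopping'' (or neighbor) form; the paraproduct is absorbed directly by $\mathbf H$, and the comparable-scale diagonal terms yield to a separate local $T1$ argument using $\mathbf H$ and $\mathbf H_\ast$ together.

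The main obstacle, and the only genuinely hard step, is controlling the stopping form using only $\mathbf A_2,\mathbf H,\mathbf H_\ast$.  Every prior attack on this framework has at this point invoked an additional ``pivotal'' or ``energy'' side condition; the content of the conjecture is precisely that such extra input should be unnecessary.  My plan is to prove a \emph{functional energy inequality} of the form
\begin{equation*}
\sum_{F \in \mathcal F} \mathsf E(F)^{2}\,\sigma(F) \;\le\; C\,\bigl(\mathbf A_2 + \mathbf H + \mathbf H_\ast\bigr)^{2}\,\lVert f\rVert_\sigma^{2},
\end{equation*}
where $\mathcal F$ is the stopping tree for $f$ and $\mathsf E(F)$ is the pigeonhole energy at $F$, by constructing a second corona built from the \emph{dual} testing inequality and bootstrapping $\mathbf H_\ast$ against carefully chosen test functions attached to each stopping level.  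Granted such a bound, a Carleson-embedding argument on $\mathcal F$ closes the stopping form and completes the proof.  I expect this step to be the real obstacle because no soft, $T1$-style derivation of energy from $\mathbf A_2$ and testing is known; success will almost surely require exploiting the very specific structure of the Hilbert kernel — one-dimensional, odd, with cancellation — through a delicate two-weight Poisson lemma that has no analogue for general higher-dimensional Calder\'on--Zygmund operators.
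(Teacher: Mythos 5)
The statement you were asked to prove is a \emph{conjecture} in this paper, not a theorem: the authors attribute it to Nazarov--Treil--Volberg and state explicitly that it has only been verified under additional side conditions. The paper's actual result (Theorem~\ref{t.main}) establishes the equivalence only when the Dini energy constant $\boldsymbol\Psi$ and its dual $\boldsymbol\Psi_{\ast}$ are both assumed finite, and these are genuine extra hypotheses not implied (so far as is known) by $\mathbf A_2$, $\mathbf H$, $\mathbf H_{\ast}$. Your treatment of necessity, the random-grid and good/bad reduction, and the splitting into separated, comparable-scale, and deeply nested pairs all match the standard program (and this paper's), and your diagnosis of where the difficulty sits is exactly right. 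But your proposal does not carry out that hard step: the assertion that a functional energy inequality of the form $\sum_{F}\mathsf E(F)^2\sigma(F)\lesssim(\mathbf A_2+\mathbf H+\mathbf H_{\ast})^2\lVert f\rVert_{\sigma}^2$ can be obtained ``by constructing a second corona built from the dual testing inequality and bootstrapping $\mathbf H_{\ast}$'' is a restatement of the open problem, not a proof. The paper isolates precisely this as Question~\ref{q2} (whether $\mathbf F\lesssim\sqrt{\mathbf A_2}+\mathbf H$ or $\mathbf{BF}\lesssim\sqrt{\mathbf A_2}+\mathbf H$), and Question~\ref{q1} raises the even more basic issue of whether the split forms $B_{\Subset}$ and $B_{\Supset}$ are separately bounded by $\mathbf B$; without an affirmative answer the entire decomposition strategy could in principle fail for some pair satisfying \eqref{e.H2}.

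Be careful also not to conflate what is known with what is needed. The plain energy inequality $\mathbf E\lesssim\mathbf A_2+\mathbf H$, for a single partition of a single interval, is indeed a consequence of $A_2$ and testing (recorded in the introduction, from \cite{1001.4043}), but it is too weak to close the stopping form. The argument of \S\ref{s.43} requires the Dini-type refinement \eqref{e.PsiI}, with summable decay $\psi(s)$ in the scale-separation parameter applied to doubly indexed families of partitions with the Poisson integral restricted to the complement of the intermediate intervals; no derivation of that from $\mathbf A_2$, $\mathbf H$, $\mathbf H_{\ast}$ is available. So your proposal amounts to a correct reduction of the conjecture to the functional-energy and stopping-form estimates, with the decisive estimate left unproved; as written it is not a proof of the conjecture, and the paper does not claim one either.
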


We will keep track of certain constants, like the three constants in the Conjecture above. Many of these will come in 
dual pairs, namely with the roles of $ w$ and $ \sigma $ reversed. An  asterisk subscript will denote the dual constant, obtained through exchanging the roles 
of the two weights.  
The exact form of the Poisson integral is not important for us, and throughout we will use this form of it. 
For weight $ \sigma $ and interval $ I$, we set 
\begin{equation}\label{e.Pdef}
\mathsf P (\sigma , I) := \int \frac { \lvert  I\rvert } { (\lvert  I\rvert + \textup{dist} (x,I) ) ^2 } \sigma (dx) \,. 
\end{equation}
This is the same, up to constants, as evaluating the usual Poisson extension of $ \sigma $ at the  $ (c, \lvert  I\rvert )$, 
where $ c$ is any point of $ I$.

To date, the Conjecture above  has only been verified for pairs of weights which satisfy side conditions, which help control certain degeneracies in the weights  $ \sigma  $ and $ w $.  
These side conditions are inspired by the \emph{Pivotal Conditions} of \cite{1003.1596}, 
and were expanded and refined in \cite{1001.4043}, using the notion of \emph{energy}.  
Our purposes are two-fold. (1) We will give a notably simpler proof of the best known current estimates.  
(2) We will analyze the proof strategy, introducing new side conditions sufficient for the two-weight estimate. 
These new side conditions are themselves, in a sense to be made precise in \S\ref{s.splitting}, a consequence of the correctness 
of the proof strategy.  (3) We point out in  Question~\ref{q1}, that it is not known if the proof strategy applies to all pairs of weights for  which satisfy the two weight inequality.   
 A new characterization when just one weight is doubling will follow from this analysis.

We define for interval $ I$ the \emph{energy of $ w$ over $ I$} to be 
\begin{equation}\label{e.EnergyDef}
\mathsf E (w , I) ^2 := 
w (I) ^{-2} \int _{I} \int _{I} 
\frac{ \lvert  x-x'\rvert ^2  } {\lvert  I\rvert ^2  } w (dx) w (dx') \,. 
\end{equation}
Assuming that $ \lvert  I\rvert=1 $, and $ w (I)=1$, this is two times the square of the distance, in the $ L ^2 (w \mathbf 1_{I})$ metric,  of the function  $ x \mathbf 1_{I}$ to the linear space of constants.  
The \emph{energy constant} of a pair of weights $ (\sigma ,w )$ is the smallest constant $ \mathbf E$ for which 
the following inequality holds. For all intervals $ I_0$ and all partitions $ \{I_j \;:\; j\ge 1\}$ of $ I_0$ we have 
\begin{equation}\label{e.EnergyConstant} 
\sum_{j\ge 1} \mathsf P (\sigma \cdot I_0, I_j) ^2 \mathsf E (w , I_j) ^2 w (I_j) \le \mathbf E ^2 \sigma (I_0) \,. 
\end{equation}
Here, inside the Poisson integral, we are identifying the interval $ I_0$ with its indicator function, which we will do throughout,  as this will be a  convenience in the heart of the proof.  

A fundamental observation is that the energy constant is finite if the  $ A_2$ constant and the testing conditions 
\eqref{e.H1} and \eqref{e.H*1} hold.  Namely, it was proved in \cite{1001.4043} that we have $ \mathbf E \lesssim \mathbf A_2 + \mathbf 
H$.  This depends upon the specific character of the $ 1/y$ kernel; its modification for other relevant singular integrals is not 
nearly as simple. 

We turn to the side conditions we need for our Theorem.  Fix a choice of $ 0< \epsilon < \frac 12$, and integer $ r\ge 2$. 
We say that  a pair of intervals $ (I,J)$ are \emph{$(\epsilon,r)$-good} 	
if for all  $ J \subset I$, satisfying $ \lvert  J\rvert\le 2 ^{-r} \lvert  I\rvert  $,  it follows that 
$ \textup{dist} (J, \partial I) \ge \lvert  I\rvert ^{1- \epsilon } \lvert  J\rvert ^{\epsilon }  $.

\begin{definition}\label{d.side} The  \emph{Dini energy constant} of   pair of weight $ (\sigma , w )$   is  the smallest constant $ \boldsymbol\Psi $ for which the following inequality holds: There is a decreasing non-negative sequence $ \psi (s)$ with $ \sum_{s\ge 1} \psi (s)=1 $, 
so that for all integers $ s$
\begin{equation}\label{e.PsiI}
\psi (s) ^{-2} 
\sum _{ j, k\ge 1}  \mathsf P (\sigma \cdot (I_0 - I_j ), I_{j,k}) ^2 \mathsf E (w , I_{j,k}) ^2 w (I_{j,k}) \le  {\boldsymbol\Psi }^2 \sigma (I_0) \,. 
\end{equation}
In this inequality, we have these conditions. 
\begin{enumerate}
\item $ I_0$ is an interval and  $ \{I_j \;:\; j\ge 1\}$ a partition  of $ I_0$.
\item  We have 
	secondary  partitions of $ I_j$ into intervals $ \{I _{j,k} \;:\; k\ge 1\}$, so that the  pair of  intervals  $ (I_j, I _{j,k})$ are $(\epsilon,r)$-good for all $j, k\ge 1$. 
\item  We have     $ \lvert  I _{j,k}\rvert < 2 ^{- s} \lvert  I_j\rvert  $ for all $ j,k\ge 1$.   
\end{enumerate}
Note that here, it is certainly required that we consider the Poisson integral  of $ \sigma $ restricted to the complement of $ I_j$, else we could not expect to get the required  decay in $s$ to make the supremum finite.    
\end{definition}

This is very close to the side condition considered in \cite{1001.4043}, and is weaker than the pivotal condition of \cite{1003.1596}.   
Namely, there is a pair of weights which \emph{fail} one direction of the  Pivotal Condition, but 
satisfy both directions of  the side condition above, for $ \psi (s) \simeq 2 ^{- \epsilon s/2}$, and the Hilbert transform is bounded for this 
	pair of weights. 

\begin{theorem}\label{t.main} Let $ w ,\sigma $ be two weights which do not 
	share any common point mass, and for some  $ 0< \epsilon <1$ and integer $ r$, 
	have finite  Dini Energy Constant $ \boldsymbol\Psi $, and finite 
dual  Dini Energy Constant $ \boldsymbol\Psi _{\ast }$. Then  Conjecture~\ref{j.2wtH}  holds.  Namely, 
we have the two weight inequality \eqref{e.H2} 
if and only if   the $ A_2$ condition \eqref{e.A2},   and the two testing conditions in 
\eqref{e.H1} and \eqref{e.H*1} hold.
\end{theorem}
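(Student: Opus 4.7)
The plan is to verify sufficiency; necessity of the $\mathbf A_2$ condition and the two testing constants $\mathbf H, \mathbf H_\ast$ is routine (test \eqref{e.H2} against indicators $\mathbf 1_I$ for $\mathbf H, \mathbf H_\ast$, and use the standard Poisson-kernel lower bound on $H$ for $\mathbf A_2$). So assume all the side constants are finite and work to bound the bilinear form $\langle H_\sigma f, g\rangle_w$ for $f\in L^2(\sigma)$, $g \in L^2(w)$.

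The first step is the Nazarov--Treil--Volberg random dyadic grid reduction. Introduce two independent random grids $\mathcal D_\sigma, \mathcal D_w$, expand $f = \sum_I \Delta^\sigma_I f$ and $g = \sum_J \Delta^w_J g$ in the corresponding weighted Haar bases (well-defined and mutually orthonormal because $\sigma,w$ share no common point mass), and restrict attention to $(\epsilon,r)$-good intervals; the bad-interval contribution is absorbed by the familiar weak-boundedness argument. The resulting bilinear sum splits by the geometric relation of $(I,J)$: disjoint, comparable, and deeply nested. Disjoint pairs are controlled by the off-diagonal kernel decay of $H$ together with $\mathbf A_2$; comparable pairs are handled by a local application of $\mathbf H, \mathbf H_\ast$.

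The nested case, $J \subsetneq I$ with $J$ $(\epsilon,r)$-good in $I$ (and its dual), is the crux. Construct a Calder\'on--Zygmund corona decomposition $\mathcal F$ of $\mathcal D_\sigma$ adapted to $f$ so that the $\sigma$-averages of $f$ are comparable to the top's average within each corona $\mathcal C_F$. Within a single corona the nested form splits into (i) a \emph{paraproduct} controlled by $\mathbf H$ together with a weighted Carleson embedding on the stopping data, (ii) a \emph{neighbor form} handled by $\mathbf A_2$, and (iii) a \emph{stopping form} which couples each child $I_j$ of $F$ to its deeply-nested good subintervals $I_{j,k}$. A quasi-orthogonality estimate across coronas, supplied by the Carleson packing of $\mathcal F$, reduces the global bound to a single-corona estimate.

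The main obstacle is the stopping form, and this is precisely where the Dini energy hypothesis is decisive. After Cauchy--Schwarz in the Haar coefficients of $g$ (with the weight $\psi(s)$ absorbed so that the residual sum in $s$ is telescoping, $\sum_{s\ge 1}\psi(s)=1$), the stopping form on $F$ reduces to bounding
\begin{equation*}
\sum_{s\ge 1} \psi(s)^{-2} \!\! \sum_{j,k\ge 1} \mathsf P(\sigma\cdot(F - I_j),\, I_{j,k})^2\, \mathsf E(w, I_{j,k})^2\, w(I_{j,k}),
\end{equation*}
indexed so that $|I_{j,k}|< 2^{-s}|I_j|$ and $(I_j, I_{j,k})$ is $(\epsilon,r)$-good. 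Hypothesis \eqref{e.PsiI} bounds each scale-$s$ term by $\boldsymbol\Psi^2 \sigma(F)$; combining with the Haar orthogonality of $g$ and summing over $F \in \mathcal F$ via the Carleson packing yields the estimate. The dual (upward-nested) case is handled symmetrically with $\boldsymbol\Psi_\ast$. The delicate point will be to arrange the scale/depth decomposition in the stopping form so that the indexing $(s,j,k)$ matches exactly the structure of \eqref{e.PsiI}, and to insert the weights $\psi(s)$ by Cauchy--Schwarz so that a single application of the Dini energy hypothesis at each scale closes the estimate.
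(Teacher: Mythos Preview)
Your outline is correct through the Calder\'on--Zygmund corona and the identification of the paraproduct, neighbor, and stopping pieces. The genuine gap is in your treatment of the stopping form: you attempt to control it by a single application of the Dini energy hypothesis \eqref{e.PsiI} at the corona top $F$, and this does not close.

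There are two concrete obstructions. First, the intervals $I_J$ that arise as Hilbert-transform arguments in the stopping form are \emph{nested}, not a partition of $F$, so the data $(I_0,\{I_j\},\{I_{j,k}\})$ required by Definition~\ref{d.side} is not available in the form you display. Second, and more seriously, the natural Cauchy--Schwarz in $I$ (using \eqref{e.h<}) produces a factor $\sigma(I_J)^{-1}$, whereas the Dini condition applied with $I_0=F$ only yields $\boldsymbol\Psi^2\sigma(F)$ on the right; the ratio $\sigma(F)/\sigma(I_J)$ is unbounded. Your weighting by $\psi(s)$ does not repair this: even granting your displayed expression, bounding each scale-$s$ block by $\boldsymbol\Psi^2\sigma(F)$ leaves a divergent sum $\sum_s 1$, and the ``telescoping'' you invoke never materializes.

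What the paper does, and what you are missing, is a \emph{second} corona (Definition~\ref{d.stopping}) built not from averages of $f$ but from the functional $\Psi_w$ itself: one stops at maximal $S\subsetneq I_0$ where $\Psi_w(I_0,S)^2\ge 4\boldsymbol\Psi^2\sigma(S)$. This yields the Carleson packing \eqref{e.S<}, which handles the testing-type pieces, and---crucially---guarantees that any $I_J$ strictly inside a stopping interval $S$ \emph{fails} the stopping criterion, so that $\Psi_w(F,I_J)^2<4\boldsymbol\Psi^2\sigma(I_J)$. This is exactly the bound needed to cancel the $\sigma(I_J)^{-1}$ from Cauchy--Schwarz and to extract the factor $\psi(s)$ that makes the scale sum converge. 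Without this second stopping-time construction the argument does not go through.
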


This theorem is essentially contained in \cite{1001.4043}, but the current proof contains  many simplifications. 
Basic to the proofs are corona decompositions. We introduce herein a Calder\'on-Zygmund corona, whose use 
precludes the need for nuanced Carleson measure estimates.   We still need a sophisticated corona decomposition 
modeled on one in \cite{1003.1596}, but in the current formulation we can again avoid appeals to Carleson measure 
estimates.  Prior arguments required a  number of such arguments.

\medskip 

One of us initiated the study of two weight inequalities for the maximal function \cite{MR676801} and fractional integrals \cite{MR930072}.
Cotlar and Sadosky have established two weight variants of the Helson--Szeg\H o theorem \cite{MR730075}, providing a complex analytic 
solution to the two weight problem. 
The dyadic variant of the Nazarov-Treil-Volberg  conjecture is proved in \cite{MR2407233}.  Two weight inequalities for maximal truncations of singular integrals  are studied  by a completely different method in \cite{0805.0246}.  This paper represents, in some sense, a unification of these two 
lines of approach.  
The two weight problem for the Hilbert transform is 
closely related to a number of subjects, including  embedding inequalities for model space \cite{NV} and de Branges space \cite{BMS}; 
interpolating sequences for Paley-Weiner space \cite{MR1617649}; and  spectral theory for perturbed operators \cite{MR2543858}.

\section{Dyadic Grids and Haar Functions} 

\subsection*{Dyadic Grids.}
A collection of intervals $ \mathcal G$ is a \emph{grid} if for all $ G,G'\in \mathcal G$, we have $ G\cap G' \in \{\emptyset , G, G'\}$.  
By a \emph{dyadic grid} we mean a grid  $ \mathcal D $ of intervals of $ \mathbb R $ such that 
for each interval $ I\in \mathcal D$, the subcollection 
$ \{ I' \in \mathcal D \;:\; \lvert  I'\rvert= \lvert  I\rvert  \}$ partitions $ \mathbb R $, aside from endpoints 
of the intervals.  In addition, the  left and right halves of $ I$, denoted by $ I _{\pm}$, are also in $ \mathcal D$.  

For $I\in \mathcal{D}$, the left and right halves  $I_{\pm}$ 
are referred to as the \emph{children} of $I$.  We denote by $\pi _{\mathcal{D}}\left( I\right) $ the unique interval
in $\mathcal{D}$ having $I$ as a child, and we refer to $\pi _{\mathcal{D}%
}\left( I\right) $ as the $\mathcal{D}$-parent of $I$.

There is no unique choice of $\mathcal D $. 
To accomodate the notion of an interval being $ (\epsilon ,r)$-good, 
one must  make a \emph{random} selection of grids, but 
we have nothing to contribute to this portion of the proof. We refer the reader to \cites{1001.4043,1003.1596} for more 
details on this point. 

\subsection*{Haar Functions.}
Let $ \sigma $ be a weight on $ \mathbb R $, one that does not assign positive mass to any endpoint 
of a dyadic grid $ \mathcal D$. We define the Haar functions associated to $ \sigma $ as follows. 
\begin{align}\label{e.hs1}
	h_{I}^{\sigma} & := \sqrt{\frac{\sigma(I_{-}) \sigma( I_{+})} {\sigma( I)}} 
\Biggl(  \frac{{I_{-}}}{\sigma( I_{-})}-\frac{{I_{+}}}{\sigma( I_{+})} \Biggr)\,.    
\end{align}
In this definition, we are identifying an interval with its indicator function, and we will do so  throughout the remainder of the paper.  
This is  an $ L ^2 (\sigma )$-normalized  function, and   has $ \sigma $-integral zero.
For any dyadic interval $ I ^{0}$, it holds that 
$  \{\sigma (I_0) ^{-1/2} {I_0}\} \cup \{ h ^{\sigma} _I \;:\; I\in \mathcal D\,, I \subset I_0\}$ is an orthogonal basis for $ L ^2 ( I_0\sigma )$.  

We will use the notation 
\begin{align}\label{e.mart}
\Delta ^{\sigma} _{I}f &= \langle f, h ^{\sigma} _{I} \rangle _{\sigma} h ^{\sigma} _{I}
= {I _{+}} \mathbb E ^{\sigma} _{I _{+}} f + 
{I _{-}} \mathbb E ^{\sigma} _{I _{-}}f - {I} 
\mathbb E ^{\sigma} _{I} f \,.  
\end{align}

The second equality is the familiar martingale difference equality, and so 
we will refer to $ \Delta ^{\sigma} _{I} f$ as a martingale difference.  It implies the familiar telescoping identity 
$
	\mathbb E _{J} ^{\sigma }f = \sum_{ I \;:\; I\supsetneq J} \mathbb E _{J} ^{\sigma } \Delta ^{\sigma } _{I} f \,. 
$
Finally, we will need the estimate below, which follows immediately from Cauchy-Schwartz. 
\begin{equation}  \label{e.h<}
\bigl\lvert  \mathbb E ^{\sigma } _{I _{\pm}  }h ^{\sigma } _{I }\bigr\rvert 
\le  \sigma (I _{\pm}) ^{-1/2} \,. 
\end{equation}

\subsection*{Good-Bad Decomposition} 

With a choice of dyadic grid $ \mathcal D$ understood, we then slightly change the definition of $ (\epsilon ,r)$-good.  
We say that $ J\in \mathcal D$ is \emph{$ (\epsilon ,r)$-good} if and only if  for all intervals $ I \in \mathcal D$ with $ \lvert  I\rvert\ge 2 ^{r+1}  \lvert  J\rvert $, we have that the distance from $ J$ to the boundary of \emph{either child} of $ I$ is at least $ \lvert  J\rvert ^{\epsilon } \lvert  I\rvert ^{1- \epsilon }$.    

For $ f\in L ^2 (\sigma )$ we set $ P _{\textup{good}} ^{\sigma } f = \sum_{ \substack{I \in \mathcal D \\  \textup{$ I$ is $(\epsilon,r)$-good}} }  \Delta ^{\sigma } _{I  }f $.  The projection $ P _{\textup{good}} ^{w } \phi  $ is defined similarly. 
Important elements of the suppressed construction of random grids \cite{1001.4043,1003.1596} are  that 
\begin{enumerate}
\item It suffices to consider a single dyadic grid $ \mathcal D$, but 
	we will sometimes write $ \mathcal D ^{\sigma } $ and $ \mathcal D ^{w}$ to emphasize the role of the two weights. 

\item For any fixed $ 0<\epsilon< \frac 12 $, we can choose integer $ r$ sufficiently large so that  it suffices 
	to consider $ f$ such that $ f= P _{\textup{good}} ^{\sigma } f$, and likewise for $ \phi \in L ^2 (w)$.
\end{enumerate}
Concerning the last property, this is, at some moments, an essential property. We suppress it in notation, however 
taking care to emphasize in the text those places in which we appeal to the property of being good.  

\section{Analysis of the Splitting Assumption} \label{s.splitting}

Our principal concern is the bilinear form $ B (f,\phi ) := \langle H _{\sigma } f, \phi  \rangle _{w}$; let  $ \mathbf B$ be the 
best constant in the inequality $ \lvert  B (f, \phi )\rvert \le \mathbf B \lVert f\rVert_{\sigma }  \lVert \phi \rVert_{w} $. 

We define two more forms here.  
Throughout the paper by $ J \Subset I $ we mean that $ I,J$ are dyadic intervals, in a fixed dyadic grid, and $ J\subset I$ with 
$ \lvert  J\rvert\le 2 ^{-r} \lvert  I\rvert  $, with $ r$ the fixed integer in the $ (\epsilon,r)$-good property. Define 
\begin{equation*}
B _{\Subset } (f,\phi ) 
:= \sum_{I  \in \mathcal D ^{\sigma }} \sum_{J \in \mathcal D ^{w} \;:\; J\Subset I} 
 \mathbb E _{I_J} ^{\sigma } \Delta ^{\sigma } f \cdot \langle H _{\sigma } I_J , \Delta ^{\sigma } _{J} \phi  \rangle _{w} 
\end{equation*}
where $ I_J$ denotes the child of $ I$ that contains $ J$.  And, as mentioned in the previous section, we will identify 
an interval and its indicator function.  Denote by $ B _{\Supset} (f, \phi )$ the \emph{dual} bilinear form obtained by 
interchanging $ w$ and $ \sigma $.   See Figure~\ref{f.1} for a diagram illustrating the definition of these two forms.  
 Set $ \mathbf B_{\Subset}$ be the best constant in the inequality 
\begin{equation*}
\lvert B_{\Subset} (f, \phi ) \rvert \le \mathbf B_{\Subset} \lVert f\rVert_{\sigma } \lVert \phi \rVert_{w} \,, 
\end{equation*}
and $ \mathbf B_{\Supset} $ be the best constant in inequality for the dual bilinear form.

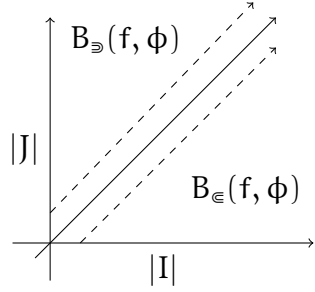
\begin{figure}
\begin{tikzpicture}
	\draw[->] (-.5,0) -- (3.5,0) node[midway,below] {$ \lvert  I\rvert $};
	\draw[->] (0,-.5) -- (0,3) node[midway,left] {$ \lvert  J\rvert $};
	\draw[->] (-.2,-.2) -- (3,3); 
	\draw[->,dashed] (0,.4) -- (2.7,3.2);
	\draw[->,dashed] (.4,0) -- (3,2.6);
	\draw (1,2.7) node {$ B _{\Supset} (f, \phi )$}; 
	\draw (2.6,.7) node {$ B _{\Subset} (f, \phi )$}; 
\end{tikzpicture}
\caption{A schematic diagram for the two forms $ B _{\Subset}$ and $ B _{\Supset}$. The dashed lines around the 
diagonal indicate that the terms associated with $ 2 ^{-r} \lvert  J\rvert\le \lvert  I\rvert\le 2 ^{r} \lvert  I\rvert   $ are treated 
in Theorem~\ref{t.preSplit}.} 
\label{f.1}
\end{figure}

In order to state our first main result, we need one more constant.  
  Let $ \mathbf W$ be the best constant in the inequality 
  \begin{equation} \label{e.W}
 	 \bigl\lvert \langle H _{\sigma } I, J \rangle _{w}\bigr\rvert\le \mathbf W \sigma (I) ^{1/2} w (J) ^{1/2} \,, 
\end{equation}
where $ I$ and $ J$ are intervals with $ 2 ^{-r} \lvert  J\rvert \le \lvert  I\rvert\le 2 ^{r} \lvert  J\rvert   $. 
Recall that the integer $ r$ is fixed. It is known that $ W \lesssim \mathbf A_2 + \min \{\mathbf H, \mathbf H _{\ast }\}$.

\begin{theorem}\label{t.preSplit} Assume that the pair of weights satisfy the $ A_2$ bound,  and the two interval testing conditions 
\eqref{e.H1} and \eqref{e.H*1}. 
Then, assuming $ f= P ^{\sigma } _{\textup{good}} f$ and likewise for $ \phi $, it holds that 
\begin{equation} \label{e.preSplit}
\bigl\lvert  B (f,\phi ) - \bigl\{B _{\Subset} (f, \phi ) + B _{\Supset} (f, \phi ) \bigr\} \bigr\rvert \lesssim
\{ \sqrt {\mathbf A_2}+   \mathbf W \} 
\lVert f\rVert_{\sigma } \lVert \phi \rVert_{w} \,. 
\end{equation}
\end{theorem}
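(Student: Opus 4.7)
The plan is a straightforward martingale expansion followed by a careful case split on the relative geometry of the Haar indices. First I would write
\begin{equation*}
B(f,\phi) = \sum_{\substack{I \in \mathcal D^\sigma \\ J \in \mathcal D^w}} \bigl\langle H_\sigma \Delta_I^\sigma f,\, \Delta_J^w \phi \bigr\rangle_w,
\end{equation*}
where because of the goodness hypothesis both sums may be restricted to good intervals. I then partition the pairs into four regimes:
\textup{(i)} \emph{comparable scales} $2^{-r} \lvert I\rvert \le \lvert J\rvert \le 2^{r} \lvert I\rvert$;
\textup{(ii)} \emph{deeply nested below}, $J \Subset I$;
\textup{(iii)} \emph{deeply nested above}, $I \Subset J$;
\textup{(iv)} \emph{remote}, meaning $I \cap J = \emptyset$ with scale ratio exceeding $2^{r}$.

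For the comparable-scale regime (i), I would expand each Haar function as a two-term signed indicator, apply the definition \eqref{e.W} to each of the four resulting inner products $\langle H_\sigma \mathbf 1_{I_\pm}, \mathbf 1_{J_\pm}\rangle_w$, and absorb the $\sigma(I_\pm)^{-1/2}$ and $w(J_\pm)^{-1/2}$ factors produced by the Haar normalization using \eqref{e.h<}. Since the scale gap is at most $r$, there is a uniformly bounded number of relevant Haar intervals $I$ for any given $J$ once one localizes, so that a Cauchy–Schwarz sweep produces a bound of order $\mathbf W \lVert f\rVert_\sigma \lVert \phi\rVert_w$.

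The heart of the proof lies in regime (ii) (regime (iii) being symmetric). For each pair $J \Subset I$, the function $\Delta_I^\sigma f$ is constant on each child of $I$. Writing $I = I_J \cup I_J^{c}$ where $I_J$ is the child containing $J$, and using that $\Delta_I^\sigma f$ has $\sigma$-mean zero,
\begin{equation*}
\bigl\langle H_\sigma \Delta_I^\sigma f,\, \Delta_J^w \phi\bigr\rangle_w
= \mathbb E^\sigma_{I_J}\Delta_I^\sigma f \cdot \bigl\langle H_\sigma \mathbf 1_{I_J},\, \Delta_J^w \phi\bigr\rangle_w + \bigl\langle H_\sigma(\Delta_I^\sigma f\cdot \mathbf 1_{I_J^c}),\, \Delta_J^w \phi\bigr\rangle_w.
\end{equation*}
Summing the first term over $J \Subset I$ and $I$ reproduces $B_\Subset(f,\phi)$ exactly. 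The second term, together with all pairs from regime (iv) with $\lvert J\rvert<\lvert I\rvert$, is a ``remote'' interaction: the two supports are disjoint, and by goodness of $J$ we have $\textup{dist}(J,I_J^c)\gtrsim \lvert J\rvert^{\epsilon}\lvert I\rvert^{1-\epsilon}$. Using the $w$-mean-zero property of $\Delta_J^w \phi$, a first-order Taylor expansion of the kernel $1/(x-y)$ at the center of $J$ yields a pointwise Poisson-type estimate whose principal factor is $\mathsf P(\sigma\cdot I_J^c, J)\lvert J\rvert$ times a Haar coefficient. The dual regime (iii) is handled symmetrically, producing $B_\Supset$ and a remote term.

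What remains is to sum the remote contributions and show the total is at most $\sqrt{\mathbf A_2}\lVert f\rVert_\sigma \lVert\phi\rVert_w$. This is the technical core, and where I expect the main obstacle to lie. After grouping by the ``remote'' interval one pivots on a classical two-weight Carleson/maximal-type inequality: the resulting bilinear form factors through the dyadic maximal operator and a Poisson testing quantity, both of which are bounded by $\sqrt{\mathbf A_2}$ alone (the $1/y$ kernel is the reason the Poisson piece absorbs directly into $\mathbf A_2$ without needing the testing constants). The subtle point to track is that one must package the ``off-child'' terms from (ii), (iii) together with the disjoint pairs (iv) into a single Poisson–Carleson sum, rather than estimating each class separately, so that the goodness-induced separation $\lvert J\rvert^\epsilon \lvert I\rvert^{1-\epsilon}$ provides the geometric decay in $r$ required to make the scale sums converge absolutely.
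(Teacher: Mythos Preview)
Your outline matches the paper's: expand in Haar, split by relative geometry, peel off $B_\Subset$ and $B_\Supset$ from the nested regimes, and bound the rest by $\sqrt{\mathbf A_2}+\mathbf W$.  But two points need correction.

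\textbf{Regime (i) is incomplete.}  You write that for comparable scales ``there is a uniformly bounded number of relevant Haar intervals $I$ for any given $J$ once one localizes,'' and bound the whole regime by $\mathbf W$.  That is only true for the nearby pairs (say $3I\cap 3J\neq\emptyset$); for fixed $J$ there are infinitely many $I$ of comparable scale with $3I\cap 3J=\emptyset$, and $\mathbf W$ alone gives $\sigma(I)^{1/2}w(J)^{1/2}$, which does not sum.  The paper splits regime~(i) in exactly this way: the nearby part uses $\mathbf W$ and Cauchy--Schwarz as you describe, while the far part uses the smoothness estimate
\[
\bigl|\langle H_\sigma I, h^w_J\rangle_w\bigr|\lesssim \sigma(I)\,\frac{\sqrt{w(J)}\,|J|}{(|J|+\textup{dist}(I,J))^2}
\]
and then a Schur test, with the row/column sums controlled by the Poisson $A_2$ condition.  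Your regime (iv) explicitly excludes comparable scales, so these pairs fall through the cracks.

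\textbf{The remote summation is not done the way you propose.}  You suggest packaging the off-child pieces from (ii)--(iii) together with all of (iv) into a single Poisson--Carleson sum that ``factors through the dyadic maximal operator.''  The paper does not do this, and it is not clear your route closes without more structure.  Instead the paper treats three separate sub-cases --- the far-away pairs $J\cap 3I=\emptyset$, the adjacent pairs $J\subset 3I\setminus I$, and the off-child term $\langle H_\sigma(I\setminus I_J),\Delta^w_J\phi\rangle_w$ --- each by an elementary argument.  The first is the same Schur test as above, now with geometric decay $2^{-s/2}$ in the scale gap $s$.  The latter two use goodness of $J$ to get $\textup{dist}(J,\partial I)\ge |J|^\epsilon|I|^{1-\epsilon}$, hence a Poisson comparison $\mathsf P(\sigma\cdot(I\setminus I_J),J)\lesssim 2^{-(1-\epsilon)s}\mathsf P(\sigma\cdot I,I)$, and then a direct square-function bound $\sum_{J}\langle H_\sigma(I\setminus I_J),h^w_J\rangle_w^2\lesssim \mathbf A_2^2\,\sigma(I\setminus I_J)$ for each fixed $I$.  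No maximal-function or Carleson-embedding machinery is invoked; everything is Schur plus $A_2$.  Your instinct that goodness supplies the geometric decay is right, but the mechanism is a pointwise Poisson comparison feeding a Schur test, not a Carleson measure argument.
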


That is, the boundedness of $ H$ is equivalent to that of the sum  $ B _{\Subset} + B _{\Supset}$.
The remainder of the sufficiency proof for the main theorem is based upon the assumption that 
$  B_{\Subset}(f,g)$ and $  B_{\Supset}(f,g)$ are \emph{bounded independently of each other.}  It is commonplace in classical settings 
that this assumption holds.\footnote{In various $ T1$ theorems, there are canonical choices of paraproducts, which are bounded 
by the assumptions of the $ T1$ theorem, whence they are freely added and subtracted in the proof.  In the current setting, there is 
no canonical choice of paraproducts.} This brings up the following 

\begin{question} \label{q1} Let $ w, \sigma $ be a pair of weights.
	  Does it hold that $\mathbf B _{\Supset} + \mathbf B _{\Subset} \lesssim \mathbf B $? 
\end{question}

  Without an answer to this question, we cannot be sure that the approach to the two weight question  used in this paper, and in \cite{1001.4043,1003.1596} can even succeed.  Currently, there is no other approach to this question.

We introduce two new side conditions, more general, and more complicated, than the Dini condition; these conditions are phrased in terms of a dyadic 
grid, which is after all not fixed. The purpose in phrasing them is to provide precise description of those 
objects which the side conditions control. 
\begin{notation}
	For $ \mathcal F $ a subset of the dyadic grid $ \mathcal D$, it is convenient to refer to $ \mathcal F$ as a sub-tree of the  dyadic grid, and it useful to think of moving up or down the $ \mathcal F$-tree, moving by inclusion.  
For dyadic $ I \in \mathcal D$, we set $ \pi _{\mathcal F} I$,  \emph{the $ \mathcal F$-parent of $ I$,}   to be the minimal  element of $ \mathcal F$ that contains $ I$.
We set $  \pi ^{1} _{\mathcal F} I= \pi _{\mathcal F} I$, and inductively define  $ \pi ^{t+1} _{\mathcal F} I$ 
to be the minimal element of $ \mathcal F$ that strictly contains $ \pi ^{t} _{\mathcal F} I$.  
This has the consequence that if $ F\in \mathcal F$, then $ \pi _{\mathcal F} ^{1}F=F$. 
We write $ \textup{Child} _{\mathcal F} (F)$ 
for the maximal elements of $ \mathcal F$ which are strictly contained in $ F$, and call them the \emph{$ \mathcal F$-children of $ F$.} 
\end{notation}

\begin{definition}\label{d.fStopping} Given interval $ I^0$ we set $ {\mathcal F} (I_0)$ to be the maximal  dyadic subintervals $ F$ such that  $  \mathbb E _{F} ^{\sigma } \lvert  f \rvert > 4  \mathbb E ^{\sigma }  _{I_0}   \lvert f\rvert  $.    
We set $  {\mathcal F} _0 = \{I ^{0}\}$, and inductively set 
\begin{equation*}
{\mathcal F} _{j+1} := \bigcup _{F\in {\mathcal F} _{j}} {\mathcal F} (F)\,. 
\end{equation*}
Then, the collection of \emph{$ f$-stopping intervals} is 
$ {\mathcal F} := \bigcup _{j=0} ^{\infty} {\mathcal F} _{j}$. 
\end{definition}

A basic fact, a consequence of the universal maximal function estimate, is 
\begin{equation}\label{e.F<}
\sum_{F\in {\mathcal F}}  \gamma (F)^2 \sigma (F) \lesssim \lVert f\rVert_{\sigma } ^2\,, 
\qquad  \gamma (F) = \mathbb E ^{\sigma } _{F}  \lvert f\rvert\,. 
\end{equation}  
This is referred to as the quasi-orthogonality condition.

We take $ f$ non-negative and supported on an interval $ I ^{0}$, and $f$-stopping intervals as above. 
Let $ \{g _{F} \;:\; F\in \mathcal F\}$ be a collection of functions in $ L ^2 (w)$ so that  for each $ F$,
\begin{enumerate}
\item   $ g_F$ is supported on $ F$  and constant on $ F'\in\textup{Child} _{\mathcal F} (F)$; 
	
\item letting $ \mathcal J ^{\ast} (F)$ be the maximal intervals $ J ^{\ast} $ such that  $ J ^{\ast} \Subset F$, $ J ^{\ast} $ is $ (\epsilon ,r)$-good,  and $ \pi _{\mathcal F} J ^{\ast} = F  $,  we have $ \mathbb E ^{w} _{J ^{\ast} } g_F=0$  for each $ J ^{\ast} \in \mathcal J ^{\ast} $.  
\end{enumerate}
We say that $ g_F$ is \emph{$ \mathcal F$-adapted to $ F$.} 
Let $ \mathbf F$ be the smallest constant in the inequality below, holding for all non-negative $ f \in L ^2 (\sigma )$,   and collections $\{g_F \}$ as just described.
\begin{equation}\label{e.funcEnergy}
	\sum_{F\in \mathcal F} \sum_{J ^{\ast} \in \mathcal J ^{\ast} (F)} 
	P (f ({\mathbb R -F}) \sigma , J ^{\ast} ) \bigl\langle  \frac {x} {\lvert  J ^{\ast} \rvert }, g_F {J ^{\ast} }\bigr\rangle _{w}
	\le \mathbf F 
	\lVert f\rVert_{\sigma } \Bigl[\sum_{F\in \mathcal F} \lVert g _F\rVert_{w} ^2  \Bigr] ^{1/2} \,. 
\end{equation}
We refer to this as the \emph{functional energy condition}.  Taking  $ \mathcal F$  to be a partition of an interval $ I_0$, 
and $ f = {I_0}$, we can recover the energy condition \eqref{e.EnergyConstant}. We denote by $ \mathbf F _{\ast }$ as the dual condition, with the roles of $ w$ and $ \sigma $ reversed. 

\medskip 

The second condition is as follows.  We write $ f\in BF _{\mathcal F} (F)$, and say that $ f$ is of \emph{bounded fluctuation } if 
(i) $ f$ is supported on $ F$, 
(ii) $ f$ is constant on each $ F'\in \textup{Child} _{\mathcal F} (F)$, 
and (ii) for each dyadic interval $ I \subset F$, which is \emph{not contained in } some $ F'\in \textup{Child} _{\mathcal F} (F)$, we have $ \mathbb E ^{\sigma } _{I} \lvert  f\rvert \le 1 $.   We then denote as $ \mathbf {BF}$ the best constant in 
the inequality 
\begin{equation}\label{e.BF}
\bigl\lvert 
 \sum_{I \;:\; \pi _{\mathcal F} I=F  } \sum_{\substack{J \;:\; J\Subset I \\ \pi _{\mathcal F} J=F}}  
 \mathbb E _{I_J} ^{\sigma } \Delta ^{\sigma } f \cdot \langle H _{\sigma } I_J , \Delta ^{\sigma } _{J} g  \rangle _{w}
\bigr\rvert \le \mathbf {BF} \{ \sigma (F) ^{1/2} + \lVert f\rVert_{\sigma }\} \lVert g\rVert_{w}
\end{equation}
where $ f\in BF _{\mathcal F} (F) $, and $ g$ is $ \mathcal F$-adapted to $ F$.  One must note that the 
two terms $ \sigma (F) ^{1/2} $ and 
$  \lVert f\rVert_{\sigma }$ on the right above are in general incomparable.  We refer to this as the \emph{bounded fluctuation condition}. 

This condition is  a consequence of the   boundedness of the form $ B _{\Subset}$, a fact which is not hard, and is proved below.\footnote{But it is not known to us that the bounded fluctuation condition is a consequence of the norm boundedness of the Hilbert transform.}
The role of the constant one in the inequalities 
$ \mathbb E ^{\sigma } _{I} \lvert  f\rvert \le 1 $ is immaterial.  It can be replaced by any fixed constant.  Indeed, if the measure 
$ \sigma $ is doubling, we could replace $ 1$ by a constant depending only on the doubling constant, then the bounded fluctuation condition reduces to the function being in $ L ^{\infty }$.

The following Theorem summarizes much of the content of this paper.  

\begin{theorem}\label{t.<} The following inequalities and their duals hold, for any pair of weights $ w, \sigma $ which 
	do not share a common point mass. 
\begin{gather}\label{e.<1}
\mathbf B _{\Subset}  \lesssim  \mathbf H + \mathbf F + \mathbf {BF} \,,  
\\ \label{e.<2}
\mathbf F   \lesssim \boldsymbol \Psi\,, \quad \textup{and } \quad \mathbf {BF} \lesssim \mathbf H + \boldsymbol \Psi \,, 
\\ \label{e.<3}
\mathbf F + \mathbf {BF}  \lesssim   \sqrt {\mathbf A_2} +  \mathbf W + \mathbf B _{\Subset}  \,. 
\end{gather}
\end{theorem}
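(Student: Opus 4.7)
The plan is to establish the three inequalities in order, using the $f$-stopping tree $\mathcal F$ of Definition~\ref{d.fStopping} as the organizational device throughout.

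To prove \eqref{e.<1}, I would decompose $B_\Subset(f,\phi)$ according to how pairs $(I,J)$ with $J\Subset I$ sit relative to $\mathcal F$. Writing $\gamma(F) = \mathbb E^\sigma_F |f|$, split the sum into three pieces: (i) pairs with $\pi_\mathcal F I = \pi_\mathcal F J = F$; (ii) paraproduct pairs with $\pi_\mathcal F J = F$ but $I$ strictly above $F$ in $\mathcal F$; (iii) boundary pairs in which $I$ itself lies in $\mathcal F$ or straddles one of its elements. Piece (i) is designed to be the bounded-fluctuation form: dividing $f$ by $\gamma(F)$ renders its restriction bounded-fluctuation in the sense of $BF_\mathcal F(F)$, while the companion factor in $\phi$ is $\mathcal F$-adapted; applying $\mathbf{BF}$ to each $F$ and using quasi-orthogonality \eqref{e.F<} yields the contribution $\lesssim \mathbf{BF}\,\lVert f\rVert_\sigma \lVert \phi\rVert_w$. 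For piece (ii), telescoping $\Delta^\sigma_I f$ across the $\mathcal F$-tower collapses $\mathbb E^\sigma_{I_J} \Delta^\sigma_I f$ into $\gamma$-values; the inner product $\langle H_\sigma I_J, \Delta^\sigma_J \phi\rangle_w$ is then partitioned by a Poisson-monotonicity argument into a near piece bounded by the testing constant $\mathbf H$ and a far-Poisson tail bounded by $\mathbf F$. Piece (iii) reduces directly to the interval testing $\mathbf H$.

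For \eqref{e.<2}, the bound $\mathbf F \lesssim \boldsymbol\Psi$ proceeds as follows. The inner product $\langle x/|J^*|, g_F\mathbf 1_{J^*}\rangle_w$ is controlled by $\mathsf E(w,J^*)\,w(J^*)^{1/2}\lVert g_F\mathbf 1_{J^*}\rVert_w$, since $g_F$ has $w$-mean zero on each $J^*\in\mathcal J^*(F)$. For the Poisson factor, I would dominate $f\mathbf 1_{\mathbb R - F}$ telescopically by $\sum_{F'} \gamma(F')\mathbf 1_{F'}$ over $\mathcal F$-ancestors of $F$, and bucket the pairs $(F,J^*)$ by the scale $s$ with $2^{-s-1}|F| \le |J^*| < 2^{-s}|F|$. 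Each scale fits the template of the Dini energy, with $\{I_j\}$ the $\mathcal F$-partition at the relevant level and $\{I_{j,k}\}=\{J^*\}$; goodness of $J^*$ matches the $(\epsilon,r)$-good hypothesis in Definition~\ref{d.side}. Summing over $s$ against $\psi(s)$, then applying Cauchy--Schwarz in $F$ and quasi-orthogonality \eqref{e.F<}, yields the claim. The companion bound $\mathbf{BF} \lesssim \mathbf H + \boldsymbol\Psi$ is obtained by expanding $\langle H_\sigma I_J, \Delta^\sigma_J g\rangle_w$ into a local part absorbed by $\mathbf H$ (via testing on $F$, combined with the bounded-fluctuation normalization) and a non-local Poisson tail which reduces, via the same bookkeeping, to the functional-energy estimate just established.

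The reverse direction \eqref{e.<3} is where the main technical work lies. The $\mathbf{BF}$ half is close to tautological: given $f\in BF_\mathcal F(F)$ and $g$ adapted to $F$, the pair feeds directly into $B_\Subset$ modulo the omitted scales $2^{-r}|J| \le |I| \le 2^r |J|$, which are absorbed using \eqref{e.W} and $\sqrt{\mathbf A_2}$. The $\mathbf F$ half requires converting the functional-energy quantity into a pairing accessible to $B_\Subset$, and I expect this to be the main obstacle. The mechanism should be a Sawyer-type monotonicity principle giving, up to $\sqrt{\mathbf A_2}$ errors, a one-sided inequality of the form
\begin{equation*}
\mathsf P(\sigma \mathbf 1_{\mathbb R - F}, J^*)\,\bigl\langle x/|J^*|,\, g_F \mathbf 1_{J^*}\bigr\rangle_w \;\lesssim\; \bigl\langle H_\sigma \mathbf 1_{\mathbb R - F},\, g_F\mathbf 1_{J^*}\bigr\rangle_w \;+\; (\text{error}),
\end{equation*}
which leverages the vanishing $w$-mean of $g_F$ on $J^*$ together with the sign structure of the $1/y$ kernel at points of $J^*$ far from $\mathbb R - F$. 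Once this is in hand, assembling $\phi = \sum_F g_F$ and choosing $f$ through a Haar expansion dual to the exterior Poisson averages recovers the left-hand side of the functional-energy inequality as a lower bound for $B_\Subset(f,\phi)$ plus absorbable $\sqrt{\mathbf A_2}+\mathbf W$ terms. Orienting signs so the monotonicity inequality goes the right way, and ensuring the candidate $f$ can be taken to be good, are the principal nuisances I foresee.
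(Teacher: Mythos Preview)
Your outline for \eqref{e.<1} and for \eqref{e.<3} is essentially the paper's: the corona decomposition by $\mathcal F$-parent of $J$, the splitting $I_J=F+(I_F\setminus F)$ for pairs with $I_J\supsetneq F$, the telescoping of $\mathbb E^\sigma_{I_J}\Delta^\sigma_I f$ to recover $\gamma(F)$, and the monotonicity lemma converting Poisson data back into Hilbert pairings are exactly the right moves. (One small correction: the paper's corona splits on $\pi_{\mathcal F}I_J$, not $\pi_{\mathcal F}I$, which is what makes the bounded-fluctuation definition apply cleanly.) For $\mathbf F\lesssim\boldsymbol\Psi$ the paper organizes by the $\mathcal F$-generation index $t$, taking $\{I_j\}=\{F':\pi^t_{\mathcal F}F'=F\}$ and $\{I_{j,k}\}=\mathcal J^*(F')$; your bucketing by $|J^*|/|F|$ does not match Definition~\ref{d.side}, where the scale parameter is $|I_{j,k}|/|I_j|$, so you would need to rebucket relative to the intermediate $I_j$.

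The genuine gap is $\mathbf{BF}\lesssim\mathbf H+\boldsymbol\Psi$. You write that the non-local tail ``reduces, via the same bookkeeping, to the functional-energy estimate just established.'' It does not. Functional energy controls $\mathsf P(\sigma\cdot(\mathbb R\setminus F),J^*)$, i.e.\ the Poisson average of $\sigma$ \emph{outside} $F$. In the bounded-fluctuation form, the argument of the Hilbert transform is $I_J$ with $J\Subset I_J\subset F$; any local/tail splitting of $I_J$ stays entirely inside $F$ and cannot be fed to $\mathbf F$. The paper flags this as ``a deep argument'' and handles it with a \emph{second} corona: one defines stopping intervals $\mathcal S$ inside $F$ as the maximal $S$ where a local Dini functional $\Psi_w(I_0,S)$ first exceeds $4\boldsymbol\Psi^2\sigma(S)$, which yields the Carleson bound $\sum_{S'\in\mathcal S(S)}\sigma(S')\le\tfrac14\sigma(S)$. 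With $S=\pi_{\mathcal S}J$ one writes $I_J=S-(S-I_J)$ when $I_J\subsetneq S$ and $I_J=(I_J-S)+S$ when $I_J\supset S$. The $S$ and $I_J-S$ pieces are controlled by testing $\mathbf H$ together with the Carleson estimate. The hard piece is $H_\sigma(S-I_J)$ for $I_J\subsetneq S$: freeze $|I|/|J|=2^s$, apply Cauchy--Schwarz in $I$ to extract $\lVert P^\sigma_S f\rVert_\sigma$, and then use the decisive fact that $I_J$ \emph{fails} the stopping criterion, so the remaining energy sum is at most $4\boldsymbol\Psi^2\psi(s)^2$. Summability in $s$ comes from $\sum_s\psi(s)\le1$. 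This second stopping-time construction, and the ``failure of stopping'' mechanism, are the main missing ingredients in your proposal.
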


 In particular, Theorem~\ref{t.main} is a corollary to the first two inequalities above, and their duals.  The interest in 
 \eqref{e.<3} is that it shows that the new side conditions, of functional energy and bounded fluctuation, are implications 
 of the proof strategy, namely the assumption that the bilinear form $ B _{\Subset} (f, \phi )$ is bounded.  Concerning \eqref{e.<2}, 
 the side condition controls the functional energy inequality by a straightforward argument, but the control of 
 the bounded fluctuation term is a deep argument, \S\ref{s.43}, initiated in \cite{1003.1596}.

\begin{question} \label{q2} For a pair of weights $ (w, \sigma )$, do any of these  inequalities hold? 
\begin{gather} \label{e.F?}
	\mathbf F  \lesssim \mathbf B \,,
\\ 
	\mathbf F  \lesssim  \sqrt{\mathbf A_2} + \mathbf H \,, 
\\ 
\mathbf {BF} \lesssim \mathbf B\,,  
\\
\label{e.BF?}
\mathbf {BF}  \lesssim  \sqrt{\mathbf A_2} + \mathbf H \,. 
\end{gather}
\end{question}

Note that the condition of functional energy is only about non-negative $ f$, and the `energy' of the weight $ w$.  
It is arguably an acceptable hypothesis to add to Conjecture~\ref{j.2wtH}; unfortunately, neither functional 
energy nor bounded fluctuation conditions admit an intrinsic formulation.  
The inequality on bounded fluctuation goes to the heart of the conjecture.

Finally, we indicate a new characterization of the two weight problem when just one weight is doubling.  This 
should be compared with the results of \cite{0805.0246}, which address maximal truncations, and also 
contrasts with a characterization in \cite{1003.1596} when \emph{both} weights are doubling.

\begin{theorem}\label{t.doubling} Let $ (w, \sigma )$ be a pair of weights with $ \sigma $ doubling.  Then, the two weight 
	inequality \eqref{e.H2} holds if and only if these constants are finite. 
	\begin{equation*}
		\mathbf A_2\,,\ \mathbf H\,,\ \mathbf H _{\ast }\,,\ \mathbf F\,,\ \mathbf {BF} < \infty \,. 
\end{equation*}
\end{theorem}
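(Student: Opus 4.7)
The plan is to route both directions of the characterization through Theorems~\ref{t.preSplit} and~\ref{t.<}, with one essential extra input made possible by the doubling hypothesis: finiteness of the \emph{dual} Dini energy constant $\boldsymbol{\Psi}_\ast$ under only the $\mathbf{A}_2$ and $\mathbf{H}_\ast$ bounds. I formulate this as a Key Lemma: if $\sigma$ is doubling then $\boldsymbol{\Psi}_\ast$ is finite, with bound controlled by $\mathbf{A}_2$ and $\mathbf{H}_\ast$ and witnessed by the Dini tail $\psi(s) = c\,2^{-\delta s}$ for some $\delta = \delta(\epsilon,\sigma) > 0$.

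For the Key Lemma I use that $\sigma$ doubling gives $\mathsf{E}(\sigma, I)^{2} \simeq 1$ uniformly in $I$: the upper bound is trivial, and the lower bound follows because doubling forces nontrivial $\sigma$-mass in both the leftmost and rightmost quarters of $I$. The dual energy bound $\mathbf{E}_\ast \lesssim \mathbf{A}_2 + \mathbf{H}_\ast$ from \cite{1001.4043} therefore reduces to a Poisson--Carleson estimate of the form $\sum_j \mathsf{P}(w \cdot I_0, I_j)^{2}\, \sigma(I_j) \lesssim C(\mathbf{A}_2,\mathbf{H}_\ast)\, w(I_0)$. For the secondary intervals, $(\epsilon,r)$-goodness yields $\textup{dist}(I_{j,k}, \partial I_j) \ge |I_j|^{1-\epsilon}|I_{j,k}|^{\epsilon}$, and a two-case kernel comparison (splitting by whether $\textup{dist}(x, I_j)$ dominates this buffer or not) gives
\[
\mathsf{P}(w \cdot (I_0 - I_j), I_{j,k}) \lesssim 2^{-(1-2\epsilon)s}\, \mathsf{P}(w \cdot I_0, I_j)
\]
whenever $|I_{j,k}| < 2^{-s}|I_j|$, with the gain $1-2\epsilon > 0$ secured by $\epsilon < \tfrac{1}{2}$. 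Squaring, using $\mathsf{E}(\sigma, I_{j,k})^{2} \lesssim 1$, and summing in $k$ via $\sum_k \sigma(I_{j,k}) = \sigma(I_j)$ then collapses the dual of \eqref{e.PsiI} to the Poisson--Carleson estimate above, leaving precisely the desired $\psi(s)^{2} = c\,2^{-2(1-2\epsilon)s}$ factor.

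Granted the Key Lemma, sufficiency is immediate. Assuming the five listed constants are finite, \eqref{e.<1} gives $\mathbf{B}_{\Subset} \lesssim \mathbf{H} + \mathbf{F} + \mathbf{BF} < \infty$ directly. On the dual side, the Key Lemma supplies $\boldsymbol{\Psi}_\ast < \infty$; the dual of \eqref{e.<2} then gives $\mathbf{F}_\ast, \mathbf{BF}_\ast < \infty$; the dual of \eqref{e.<1} gives $\mathbf{B}_{\Supset} < \infty$. Since $\mathbf{W} \lesssim \mathbf{A}_2 + \min\{\mathbf{H}, \mathbf{H}_\ast\}$ is finite, Theorem~\ref{t.preSplit} assembles $\mathbf{B} \lesssim \mathbf{B}_{\Subset} + \mathbf{B}_{\Supset} + \sqrt{\mathbf{A}_2} + \mathbf{W} < \infty$. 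For necessity, $\mathbf{B} < \infty$ delivers $\mathbf{A}_2, \mathbf{H}, \mathbf{H}_\ast < \infty$ by standard testing on indicators, which is exactly the input the Key Lemma needs to conclude $\mathbf{B}_{\Supset} < \infty$ via the same dual chain. Theorem~\ref{t.preSplit}, rearranged, then bounds $\mathbf{B}_{\Subset} \lesssim \mathbf{B} + \mathbf{B}_{\Supset} + \sqrt{\mathbf{A}_2} + \mathbf{W} < \infty$, and a single application of \eqref{e.<3} closes the loop: $\mathbf{F} + \mathbf{BF} \lesssim \sqrt{\mathbf{A}_2} + \mathbf{W} + \mathbf{B}_{\Subset} < \infty$.

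The main obstacle is the Key Lemma, and within it the kernel comparison: one must confirm that the goodness buffer $|I_j|^{1-\epsilon}|I_{j,k}|^{\epsilon}$ around $\partial I_j$ produces a genuine geometric gain in $s$ uniformly over the partition geometry, even when the $w$-mass of $I_0 - I_j$ concentrates near $\partial I_j$. Doubling of $\sigma$ enters twice: directly to replace the dual energy factor $\mathsf{E}(\sigma, \cdot)^2$ by a uniform constant, unlocking the Carleson-type reduction, and implicitly to guarantee that the corresponding lower bound $\mathsf{E}(\sigma, I)^2 \gtrsim 1$ holds so that the reduction is not lossy.
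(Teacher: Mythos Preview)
Your proposal is correct and follows essentially the same approach as the paper: both arguments hinge on the observation that doubling of $\sigma$ forces $\mathsf E(\sigma,I)^2 \simeq 1$, which upgrades the dual energy condition (necessary from $\mathbf A_2$ and $\mathbf H_\ast$) to the dual pivotal/Dini condition $\boldsymbol\Psi_\ast<\infty$, after which both directions are routed through Theorems~\ref{t.preSplit} and~\ref{t.<} exactly as you describe. Your Key Lemma fills in more detail than the paper does---in particular your Poisson kernel comparison under goodness is precisely Lemma~\ref{l.donotuse}---but the architecture is the same.
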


\begin{proof}
	As $ \sigma $ is doubling, there is a constant $ c$ so that for any  interval $ I$, and any subinterval $ I'$ of length 
	$ \frac 14$ of $ I$, it holds that $ \sigma  (I')\ge c \sigma  (I)$.  From this, it follows that $ \mathsf E (\sigma ,I)\ge  c/4$.
	Namely, the energy of any interval is strictly bounded away from zero.  
	Assuming  the finiteness of $ \mathbf A_2$, $ \mathbf H _{\ast } $, as we may do in both directions of the argument,
	one may easily verify that the \emph{dual} Dini condition holds, that is $ \boldsymbol \Psi _{\ast } < \infty $.   
	(In fact, the pivotal condition of Nazarov-Treil-Volberg holds, as follows from the energy  condition \eqref{e.EnergyConstant}, which is necessary from $ \mathbf A_2$ and $ \mathbf H$.)  
	
	Assuming that the Hilbert transform is bounded, we necessarily have the finiteness of the $  A_2$ and 
	testing constants.  Therefore, the dual bilinear form is bounded, $ \mathbf B _{\Supset} < \infty $, hence 
	$ \mathbf B _{\Subset}$ is also finite, bounding $  \mathbf F $ and $\mathbf {BF}$, as claimed. 
	
	In the reverse direction, the finiteness of $ \mathbf A_2$, $ \mathbf H _{\ast } $ 
	and $ \boldsymbol \Psi _{\ast }$ implies 
	the boundedness of $ \mathbf B _{\Supset}  $, and the additional assumptions on functional energy $ \mathbf F$ and 
	bounded fluctuation $ \mathbf {BF}$ imply the boundedness of $  \mathbf B _{\Subset} $, hence the Hilbert transform 
	is bounded.    
\end{proof}

\section{The Splitting of the Operator} 

We expand the full bilinear form $ B (f,g) :=  \langle H _{\sigma} f, \phi  \rangle _{w}$ according  to the weighted Haar basis.
For the proof, we will take some (large) interval $ I ^{0}$, and assume that $ f $ and $ \phi $ are supported on $ I ^{0}$. 
Note that by the testing hypothesis,  
\begin{equation*}
\bigl\lvert 	\mathbb E ^{\sigma } _{I ^{0}} f \langle H _{\sigma } (I ^{0}), \phi  \rangle _{w}\bigr\rvert 
\le  \mathbf H \lvert 	\mathbb E ^{\sigma } _{I ^{0}} f \rvert \sigma (I ^{0}) ^{1/2} \lVert \phi \rVert_{w} \,. 
\end{equation*}
The dual inequality also holds, so we are free to assume that $ f$ and $ \phi $ have respective means zero, and hence are 
in the closed linear span of the (good) Haar functions.  

In the first generation, there are three terms,  which are  largely `below diagonal',  `diagonal', and `above diagonal' parts.  
\begin{align}
\langle H _{\sigma} f , \phi  \rangle_{w} 
& = B_{1,1} (f,\phi ) + B_{1,2} (f, \phi ) + B_{1,3} (f, \phi ) \, ,
\\
B_{s,t}(f, \phi )  
& := 
\sum_{ (I,J) \in \mathcal P _{s,t}}
\langle H _{\sigma} \Delta ^{\sigma}_I f , \Delta ^{w} _{J} \phi  \rangle _{w}  \, ,
\\ \label{e.12}
\mathcal P _{1,2} 
& := 
\bigl\{ (I,J) \;:\;  2 ^{-r} \lvert  I\rvert\le \lvert  J\rvert\le 2 ^{r} \lvert  J\rvert    \bigr\}
\\ \label{e.13}
\mathcal P _{1,3}
&:=
\bigl\{ (I,J) \;:\;   \lvert  J\rvert < \lvert  I\rvert \}\,. 
\end{align}
The term $ B_{1,1} $ is dual to  $ B_{1,3}$, so we do not explicitly define it here, as we will 
concentrate on $ B_{1,3}$.   

The diagonal term is  straightforward to control, and in \S\ref{s.12}, we will show 
\begin{equation} \label{e.12<}
\bigl\lvert  B_{1,2} (f ,\phi )\bigr\rvert \lesssim 
(\mathbf A_2 + \mathbf W) \lVert f\rVert_{\sigma} \lVert \phi \rVert_{w} \,. 
\end{equation}
We shall follow this pattern of postponing certain estimates that are 'routine' till a later section, preferring to 
pass to the more delicate parts of the decomposition, which will always have the larger  second indices. 

\medskip 

We concern ourselves with the term $ B_{1,3}$ defined in \eqref{e.21}.   And we right away 
split it into an `far away',  `local', and  `inside' part, defined as follows. Set 
\begin{align}
B_{1,3} &= B_{2,1}+ B_{2,2}+ B_{2,3} 
\\ \label{e.21}
\mathcal P _{2,1} 
& := \bigl\{ (I,J) \in \mathcal P _{1,3} \;:\;    3 I \cap J= \emptyset  \bigr\}\,,
\\ \label{e.22}
\mathcal P _{2,2} 
& := \bigl\{ (I,J) \in \mathcal P _{1,3} \;:\;     J \subset 3I \backslash I  \bigr\}\, ,
\\  \label{e.23}
\mathcal P _{2,3} 
& := \bigl\{ (I,J) \in \mathcal P _{1,3} \;:\;     J \Subset  I  \bigr\}\,.
\end{align}
In \S\ref{s.21} and \S\ref{s.22}, we will show that these two terms are also controlled by the $ A_2$ constant. 
\begin{equation}\label{e.21<}
\bigl\lvert B_{2,1} (f, \phi )\bigr\rvert+\bigl\lvert B_{2,2} (f, \phi )\bigr\rvert 
\lesssim 
\mathbf A_2 \lVert f\rVert_{\sigma} \lVert \phi \rVert_{w} \,. 
\end{equation}

Concerning the term $ B_{2,3}$, we will make this further decomposition.  
For the pairs of intervals $ (I,J)$ in question, we have $ J\subsetneq I$. Recall that 
$ I _{J}$ is the child of $ I$ that contains $ J$.  Now, the argument of the Hilbert transform 
is $ \Delta _{I} ^{\sigma} f$, which is constant on the two children of $ I$, namely $ I_J$ and $ I \backslash I_J$.  
This permits us to write 
\begin{align}
B_{2,3} &= B_{3,1} + B_{3,2}, 
\\ \label{e.31}
B_{3,1} (f, \phi ) &= 
\sum_{(I,J)\in \mathcal P _{2,3}}  
\mathbb E _{I \backslash I_J} ^{\sigma} \Delta^{\sigma} _{I} f \cdot  \langle H _{\sigma} (I - I_J), \Delta ^{w} _{J} \phi  \rangle_{w}\,  , 
\\ \label{e.32}
B_{3,2} (f, \phi ) &= 
\sum_{(I,J)\in \mathcal P _{2,3}}  
\mathbb E _{ I_J} ^{\sigma} \Delta^{\sigma} _{I} f \cdot \langle H _{\sigma}  I_J, \Delta ^{w} _{J} \phi  \rangle_{w}\,. 
\end{align}
We will show in \S\ref{s.31} that we have 
\begin{equation}\label{e.31<}
\bigl\lvert B_{3,1} (f, \phi )\bigr\rvert \lesssim 
\mathbf A_2 \lVert f\rVert_{\sigma} \lVert \phi \rVert_{w} \,. 
\end{equation}

The bilinear form $ B_{3,2} $ is the form $ B _{\Subset} $ of \S\ref{s.splitting}, and this is the notation that we will use below.
Our considerations to this point,  together with their duals, completes the proof of Theorem~\ref{t.preSplit}. 

\section{The Calder\'on-Zygmund Corona} \label{s.corona} 

This section will be devoted to a proof of the inequality \eqref{e.<1}, namely that the side conditions of 
functional energy and bounded fluctuation can be used to control the bilinear form $ B _{\Subset}$.  This is the 
first of the two important corona arguments in the paper.  The reader should recall the definition of $ f$-stopping intervals $ \mathcal F$
in Definition~\ref{d.fStopping}. 

\begin{remark}\label{r.standardArgument} The intervals $ {\mathcal F}$ are the standard construct in proving  paraproduct style arguments, moreover the identification and control of paraproducts is  an essential part of the two-weight problem.  
	Thus, it is natural to incorporate these intervals into the proof at an early stage. Indeed, if this step is not taken, nuanced 
	Carleson measure estimates are  needed. 
\end{remark}

\begin{definition}\label{d.coronaPairs}[The Calder\'on-Zygmund  Corona Decomposition]
For $ F \in \mathcal F $, we say that the pair of intervals $ (I,J) \in \mathcal P _{2,3} $ are in $ \mathcal C (F)$ if and only if 
$ \pi _{\mathcal F} J=F$.  This definition only depends upon $ J$. 
We set $ \mathcal C_o (F)$ to be those pairs $ (I,J) \in \mathcal C (F)$ such that 
$ \pi _{\mathcal F}  I_J= F$. Note the dependence of this definition on the \emph{pair} $ (I,J)$. 
And, let $ \mathcal C^o (F)= \mathcal C (F) \backslash \mathcal C_o (F)$.  
Define associated projections 
\begin{equation}\label{e.PwS}
P ^{w } _{F} \phi := \sum _{J \;:\; \pi _{\mathcal F} J= F} \Delta ^{w } _{J} \phi \,. 
\end{equation}
Note that the latter projections are pairwise $ L ^2 (w )$-orthogonal in $ F\in \mathcal F$,  and we have 
\begin{equation}\label{e.PwSum}
\sum_{F\in \mathcal F} \lVert P ^{w } _{F} \phi\rVert_{w } ^2  \le \lVert \phi \rVert_{w } ^2 \,. 
\end{equation}
We use a similar, but distinct, notation   
$
P ^{\sigma  } _{F} f := \sum _{I \;:\; \pi _{\mathcal F} I _{\pm}= F}  \Delta ^{\sigma } _{I} f 
$.
Here, we sum over all $ I$ so that a dyadic child of $ I$ has $ \mathcal F$-parent $ F$. These projections are not orthogonal in $ F$, but nevertheless satisfy a variant of \eqref{e.PwSum} that we will need.  
\end{definition}

The \emph{(Calder\'on-Zygmund) corona decomposition} of the bilinear form $ B_{\Subset}$ is then based upon the $ f$-stopping intervals, hence non-linear in nature.
\begin{align}
B_{\Subset} (f, \phi ) &:= \sum_{F \in \mathcal F}
\sum_{t=1} ^{3}
B_{t} (f, \phi ; F) \,,
\\ \label{e.41}
B_{1} (f ,\phi; F ) & := 
\sum_{ (I,J) \in \mathcal C^o (F)}  \mathbb E _{F}  
\Delta ^{\sigma } _{I} f  \cdot 
\langle H _{\sigma } (I_F \backslash F) ,  \Delta^{w} _{J} \phi   \rangle _{w } \,,
\\ \label{e.42}
B_{2} (f, \phi ;F ) &:=  
\sum_{ (I,J) \in \mathcal C^o (F)}  \mathbb E _{F} \Delta ^{\sigma } _{I} f  \cdot 
\langle H _{\sigma } (F), \Delta^{w} _{J} \phi  \rangle _{w } \,, 
\\ \label{e.43}
B_{3} (f, \phi ; F)   
&:= \sum_{(I,J) \in \mathcal C_o (F)}  \mathbb E ^{\sigma } _{I_J} \Delta ^{\sigma } _{I} f \cdot 
\langle H _{\sigma } (I_J), \Delta ^{w } _{J} \phi  \rangle _{w } \,. 
\end{align}
Let us argue that we have equality above. 
The term $ B_{3}$ is the only one that depends upon $ \mathcal C _{o}$, and it will be further decomposed below.  
The remaining two terms depend upon the complementary part of the corona $\mathcal C$. 
For $ (I,J) \in \mathcal C^o$, note that $ I_J $ strictly contains $ F$,    $ \Delta _{I} f$ is constant on $ I_J\supset F$, hence 
$ \mathbb E _{F} \Delta ^{\sigma } _{I} f =  \mathbb E _{I_J} \Delta ^{\sigma } _{I} f  $.   
And, we have written $ I_J = I_F = F + (I_F \backslash F)$ to get the two terms $ B_{1}$ and $ B_{2}$.

 \subsection{The Term $ B_3$}
 
 We will show in \S\ref{s.43} that we have the inequality 
 \begin{equation}\label{e.b3<}
 	 \bigl\lvert B _{3} (f, \phi ; F)\bigr\rvert \lesssim 
 	 \{\mathbf H + \boldsymbol \Psi \} \bigl\{ \gamma (F) \sigma (F) ^{1/2} + \lVert P ^{\sigma } _{F} f \rVert_{\sigma } \bigr\} 
 	 \lVert P ^{w} _{F} \phi \rVert_{w}
\end{equation}
An application of Cauchy-Schwartz, and (quasi)-orthogonality will complete the estimate of this term.

 \subsection{The Term $ B _{2}$}
We claim the  estimate 
\begin{align} \label{e.42<}
\lvert B_{2} (f, \phi ; F)\rvert & \lesssim  \mathbf H 
 \gamma (F) \sigma (F) ^{1/2}  
 \lVert P ^{w } _{F} \phi\rVert_{w } \,, 
\end{align} 
In view of the quasi-orthogonality condition \eqref{e.F<} and \eqref{e.PwSum},  a trivial application of Cauchy-Schwartz will complete the  estimate of this term.  Namely, we have 
\begin{align*}
	\sum_{F\in \mathcal F}
	\lvert B_{2} (f, \phi ; F)\rvert & \lesssim  \mathbf H 
	\Biggl[  \sum_{F\in \mathcal F} \gamma (F) ^2 \sigma (F) \times 
	\sum_{F\in \mathcal F}  \lVert P ^{w } _{F} \phi\rVert_{w } ^2 
	\Biggr] ^{1/2} 
	\\
	& \lesssim  \mathbf H  \lVert f\rVert_{\sigma } \lVert \phi \rVert_{w} \,. 
\end{align*}

The proof of \eqref{e.42<} is quickly obtained.  
We estimate, using the telescoping property of martingale differences,  
\begin{align*}
\lvert  B_{2} (f, \phi )\rvert  & \le 
\sum _{J \;:\; \pi _{\mathcal F} J = F} 
\Bigl\lvert  \sum_{I \;:\; (I,J) \in \mathcal C^o (F)} 
\mathbb E _{F} \Delta ^{\sigma } _{I} f 
\Bigr\rvert \cdot \lvert\langle  H _{\sigma } F , \Delta ^{w} _{J} \phi  \rangle_{w}\rvert
\\
& =  \bigl\lvert \mathbb E ^{\sigma } _{F} f \bigr\rvert  \lVert H _{\sigma  } (F)\rVert_{w }  \lVert \mathsf P ^{w} _{F} \phi \rVert_{w} 
\\
& \lesssim  \mathbf H \gamma (F) \sigma (F) ^{1/2}  \lVert \mathsf P ^{w} _{F} \phi \rVert_{w}  \,. 
\end{align*}
 The expression $ \mathbb E _{F} \Delta ^{\sigma } _{I}f$ arises above since for $ (I,J) \in \mathcal C ^{0} (F)$, it holds  that $F\subsetneq I_J $. Hence, the sum of martingale differences can be summed exactly as above.

For $ B_{3} (f, \phi ; F)$, the definition of bounded fluctuation in \eqref{e.BF} was constructed for this term.  
Namely, the function $ (C \gamma (F)) ^{-1} f \cdot F$ is in $  BF _{\mathcal F} (F)$.  
The function $ P ^{w} _{F} \phi $ is $ \mathcal F$-adapted to $ F$, whence 
\begin{equation*}
\bigl\lvert B_{3} (f, \phi ; F)\bigr\rvert \lesssim \mathbf {BF} 
\{ \gamma (F) \sigma (F) ^{1/2} + \lVert P ^{\sigma } _{F} f\rVert_{\sigma } \} 
\lVert P ^{w} _{F} \phi \rVert_{w} 
\end{equation*}
An application of Cauchy-Schwartz, and  appeal to (quasi-)orthogonality  will complete the analysis of this term. 

\subsection{The Term $ B_{1}$.}\label{s.41}

The analysis of \eqref{e.41} is based upon the functional energy condition, and leads to this estimate: 
\begin{equation}\label{e.41<}
\bigl\lvert B_{1} (f, \phi )\bigr\rvert \lesssim 
\mathbf F  \lVert f\rVert_{\sigma } \lVert \phi \rVert_{w } \,. 
\end{equation}
The following lemma records a  monotonicity property for the Hilbert transform, and a property involving the 
Poisson integral.

\begin{lemma}[Monotonicity Property]
\label{mono}Suppose that $\nu $ is a signed measure, and $\mu $ is a
positive measure with $\mu \geq \left\vert \nu \right\vert $, both supported
outside an interval $I$.  Let $J\subset  J ^{\ast} \Subset I$.  Then it holds that 
\begin{equation}\label{e.mono+P}
	\left\vert \left\langle H\nu ,h^w_J \right\rangle _{w }\right\vert \leq 	\left\langle H\mu ,h^w_J \right\rangle _{w } 
\end{equation}
In addition, we have the estimate 
\begin{equation}\label{e.Elower}
	\mathsf P ( \mu , J  ^{\ast} )   \bigl\lvert \bigl\langle  \frac x {\lvert  J ^{\ast} \rvert }, h^w_J  \rangle _{w}\bigr\rvert 
\lesssim 
	\left\langle H\mu ,h^w_J \right\rangle _{w } 
  + \Bigl[\frac {\lvert  J\rvert } {\lvert  I\rvert } \Bigr] ^{1- \epsilon } P (\mu ,J) \sqrt {w (J)}
\end{equation}  
\end{lemma}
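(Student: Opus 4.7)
The plan is to exploit that $h^w_J$ has mean zero with respect to $w$ by Taylor expanding $1/(x-y)$ in the variable $x$ around a reference point $c\in J$; the goodness $J\subset J^*\Subset I$ forces $|x-y|$ and $|c-y|$ to dominate $|x-c|$ by a factor $(|I|/|J|)^{1-\epsilon}$ for every $y$ outside $I$, which makes Taylor remainders genuinely small.

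For \eqref{e.mono+P}, I would first establish the pointwise positivity $\langle H\delta_y,h^w_J\rangle_w\ge 0$ for each $y\notin J$. Indeed, on $J$ the function $x\mapsto 1/(x-y)$ is strictly decreasing, and with the sign convention in \eqref{e.hs1} the Haar function $h^w_J$ is positive on $J_-$ and negative on $J_+$, so a direct computation gives $\langle H\delta_y,h^w_J\rangle_w = \sqrt{w(J_-)w(J_+)/w(J)}\bigl(\mathbb E^w_{J_-}\tfrac{1}{x-y}-\mathbb E^w_{J_+}\tfrac{1}{x-y}\bigr)\ge 0$. Writing $\nu=\nu_+-\nu_-$ as its Jordan decomposition so that $\nu_++\nu_-\le\mu$ as positive measures, the triangle inequality then yields
\begin{equation*}
|\langle H\nu,h^w_J\rangle_w|\le \langle H(\nu_++\nu_-),h^w_J\rangle_w \le \langle H\mu,h^w_J\rangle_w.
\end{equation*}

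For \eqref{e.Elower}, with $c$ the center of $J$ I would use the algebraic identity
\begin{equation*}
\frac{1}{x-y}=\frac{1}{c-y}-\frac{x-c}{(c-y)^2}+\frac{(x-c)^2}{(c-y)^2(x-y)}.
\end{equation*}
Integrating against $h^w_J(x)\,w(dx)$ kills the constant term, produces the main contribution $-\langle x,h^w_J\rangle_w/(c-y)^2$, and leaves a remainder $R(y)$ with
\begin{equation*}
|R(y)|\lesssim \frac{|J|^{2}\sqrt{w(J)}}{(c-y)^2\cdot |J|^\epsilon|I|^{1-\epsilon}},
\end{equation*}
obtained from Cauchy--Schwarz on $h^w_J$ together with the goodness bound $|x-y|\ge |J|^\epsilon|I|^{1-\epsilon}$. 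Integrating in $d\mu(y)$ and invoking the Poisson comparisons $|J^*|\int d\mu(y)/(c-y)^2\simeq \mathsf P(\mu,J^*)$ and $|J|\int d\mu(y)/(c-y)^2\simeq \mathsf P(\mu,J)$ (both valid because $c\in J\subset J^*\subset I$ while $\mu$ is supported outside $I$) converts the main term into $\mathsf P(\mu,J^*)\,|\langle x/|J^*|,h^w_J\rangle_w|$ and the remainder into $(|J|/|I|)^{1-\epsilon}\mathsf P(\mu,J)\sqrt{w(J)}$; rearranging gives \eqref{e.Elower}.

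The main obstacle is the sign bookkeeping in the second part: the identity produced by the expansion reads $\langle H\mu,h^w_J\rangle_w=-\int\langle x,h^w_J\rangle_w(c-y)^{-2}\,d\mu(y)+\int R(y)\,d\mu(y)$, so to conclude that the positive quantity $\mathsf P(\mu,J^*)\,|\langle x/|J^*|,h^w_J\rangle_w|$ is the term being dominated (rather than $-\mathsf P(\mu,J^*)\langle x/|J^*|,h^w_J\rangle_w$ for some unknown sign), I need the pointwise positivity from part~(1) to ensure the main term and $\langle H\mu,h^w_J\rangle_w$ share a sign modulo the remainder, so that absolute values can be inserted coherently. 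Once this is handled, the two-sided Poisson comparison $|c-y|\simeq |J^*|+\textup{dist}(y,J^*)$ is routine.
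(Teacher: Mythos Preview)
Your proposal is correct and matches the paper's approach: for \eqref{e.mono+P} the paper rewrites $\langle H\nu,h^w_J\rangle_w$ as a triple integral over $J_+\times J_-\times(\mathbb R\setminus J)$ with the manifestly non-negative kernel $(x-x')/[(y-x)(y-x')]$, which is exactly your pointwise positivity of $\langle H\delta_y,h^w_J\rangle_w$ in integrated form, and for \eqref{e.Elower} it Taylor-expands $H\mu$ to first order at $c_J$ and bounds $D^2H\mu$ on $J$ via goodness---your algebraic identity written in derivative notation. Your sign worry dissolves under the convention \eqref{e.hs1}, which forces $\langle x,h^w_J\rangle_w<0$; hence the main term $-\langle x,h^w_J\rangle_w\int(c-y)^{-2}\,d\mu$ is already non-negative and agrees in sign with $\langle H\mu,h^w_J\rangle_w\ge 0$ from the first part, so no further bookkeeping is needed.
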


The function $ H \mu $ will be monotonically decreasing on $ J$, and we have chosen the definition of the 
Haar functions so that $ \left\langle H\mu ,h^w_J \right\rangle _{w } $ is non-negative, while $ \langle x,h ^{w} _{J} \rangle _{w}$ 
is negative.  

\begin{proof}  This argument is special to the Hilbert transform. 
Let $J_{-}=J\cap \left( -\infty ,c\right) $ and $J_{+}=J\cap \left( c,\infty
\right) $. We may renormalize the Haar function $ h ^{w} _{J}$ so that 
\begin{equation*}
\int_{J_{-}}\left\vert h^w_J \right\vert dw =\int_{J_{+}}\left\vert
h^w_J \right\vert dw =1.
\end{equation*}%
Then we have%
\begin{align}
\left\langle H\nu ,h^w_J \right\rangle _{w } &=\int_{J_{+}}H\nu
\left( x\right) h^w_J \left( x\right) dw \left( x\right)
+\int_{J_{-}}H\nu \left( x\right) h^w_J \left( x\right) dw \left(
x\right) \\
&=\int_{J_{+}}H\nu \left( x\right) \left\vert h^w_J \left( x\right)
\right\vert dw \left( x\right) -\int_{J_{-}}H\nu \left( x^{\prime
}\right) \left\vert h^w_J \left( x^{\prime }\right) \right\vert dw
\left( x^{\prime }\right) \\
&=\int_{J_{+}}\int_{J_{-}}\left[ H\nu \left( x\right) -H\nu \left(
x^{\prime }\right) \right] \left\vert h^w_J \left( x^{\prime }\right)
\right\vert dw \left( x^{\prime }\right) \left\vert h^w_J \left(
x\right) \right\vert dw \left( x\right) \\
\label{e.cvb}
&=\int_{J_{+}}\int_{J_{-}}\int_{\mathbb{R}\setminus J}\frac{x-x^{\prime }}{%
\left( y-x\right) \left( y-x^{\prime }\right) }d\nu \left( y\right)
\left\vert h^w_J \left( x^{\prime }\right) \right\vert dw \left(
x^{\prime }\right) \left\vert h^w_J \left( x\right) \right\vert dw
\left( x\right) ,
\end{align}%
and since $\frac{x-x^{\prime }}{\left( y-x\right) \left( y-x^{\prime
}\right) }\geq 0$ for $y\in \mathbb{R}\setminus J$ and $x\in J_{+}$ and $%
x^{\prime }\in J_{-}$, we have%
\begin{align*}
\left\vert \left\langle H\nu ,h^w_J \right\rangle _{w }\right\vert
&\leq \int_{J_{+}}\int_{J_{-}}\int_{\mathbb{R}\setminus J}\frac{x-x^{\prime
}}{\left( y-x\right) \left( y-x^{\prime }\right) }d\mu \left( y\right)
\left\vert h^w_J \left( x^{\prime }\right) \right\vert dw \left(
x^{\prime }\right) \left\vert h^w_J \left( x\right) \right\vert dw
\left( x\right) \\
&=\left\langle H\mu ,h^w_J \right\rangle _{w },
\end{align*}%
where the last equality follows from the previous display with $\mu $ in
place of $\nu $.  This concludes the first half of \eqref{e.mono+P}. 

For the second estimate \eqref{e.Elower},  we will make a first order Taylor polynomial approximation of 
$ H   \mu $ on the interval $ J$. Let us denote the derivative  by $ D$, 
and for $ x\in J$ note that 
\begin{equation*}
	D H \mu (x) = - \int \frac1  { (x-y) ^2 } \; \mu (dx)\,, \quad 
	D ^2 H \mu (x)  =  \int \frac1 { (x-y) ^3 } \; \mu (dx)\,. 
 \end{equation*}
The point here is that the second derivative is somewhat small. 
From this, we can write, letting $ c_J$ be the center of $ J$,  
\begin{align*}
\bigl\lvert 	H \mu (x) - H \mu (c_J)  -  (x-c_J) H \mu (c_J) \bigr\rvert 
&\le \sup _{x \in J} (x - c_J) ^2 \lvert  D ^2 H \mu (x)\rvert 
\\
& \lesssim  \frac {\lvert  J\rvert } {\textup{dist} (\partial J, I)} P (\mu ,J)\,, \qquad x\in J\,.  
\end{align*}

When we are estimating the inner product with a Haar function, constants are immaterial, therefore, using the fact that $ J$ is good, 
we can write 
\begin{align*}
	\bigl\lvert \langle H \mu , h ^{w} _{J} \rangle _{w} - 
	D H \mu (c_J) \bigl\langle  x ,  h ^{w} _{J} \bigr\rangle_{w}
	\bigr\rvert
	& \lesssim \Bigl[\frac {\lvert  J\rvert } {\lvert  I\rvert } \Bigr] ^{1- \epsilon } P (\mu ,J) \sqrt {w (J)} 
\end{align*}
Finally, one uses $	D H \mu (c_J)  \lesssim \lvert  J\rvert ^{-1} P (\mu ,J)  $.  This finishes the argument. 
\end{proof}

 Returning to the analysis of $ B_1$, write  
\begin{equation*}
	\widetilde f _{F} := \sum_{I \;:\; F\subsetneq I}  \mathbb E _{I_F} \Delta ^{\sigma } _{I} f \cdot (I_F- F) \,, 
	\qquad 
	 \overline f := \sum_{F\in \mathcal F} \gamma (F) \cdot F \,. 
\end{equation*}
Note that  $ \lvert \widetilde f _{F}\rvert \lesssim \overline  f$.  
Let $ \mathcal J ^{^{\ast} } (F)$ be the maximal  $ (\epsilon ,r)$-good intervals $ J ^{\ast} \Subset F$. 
Applying the Lemma, we have 
\begin{align*}
\lvert  B_{1} (f, \phi ;F)\rvert 
& \le \sum_{J ^{\ast} \in \mathcal J ^{\ast} (F)} \sum_{\substack{J \subset J ^{\ast} \\ \pi _{\mathcal F} J=F }} 
\lvert  \langle  H _{\sigma } \widetilde f _{F}, \Delta ^{w} _{J} \phi  \rangle _{w}\rvert 
\\
& \lesssim 
\sum_{J ^{\ast} \in \mathcal J ^{\ast} (F)} \sum_{\substack{J \subset J ^{\ast} \\ \pi _{\mathcal F} J=F }} 
P ( \overline  f  \sigma   (\mathbb R -F),  J ^{\ast} ) 
\bigl\lvert  \bigl\langle  \frac x {\lvert  J ^{\ast} \rvert },  \Delta ^{w} _{J} \phi  \bigr\rangle \bigr\rvert
\\
&=
\sum_{J ^{\ast} \in \mathcal J ^{\ast} (F)} \sum_{\substack{J \subset J ^{\ast} \\ \pi _{\mathcal F} J=F }} 
P ( \overline  f \cdot \sigma ,  J ^{\ast} ) 
\bigr\langle  \frac x {\lvert  J ^{\ast} \rvert },  \Delta ^{w} _{J}\overline   \phi  \bigr\rangle 
\end{align*}  
where $ \overline \phi $ is a obtained from $ \phi $ by an appropriate $ w$-Haar multiplier, chosen to make all 
the inner products above non-negative so that the absolute
values can be removed.   
By orthogonality of the projections $ P ^{w} _{F} $, and the definition of the functional energy condition, we see that the 
sum over $ F\in \mathcal F$ of this last expression verifies \eqref{e.41<}. 

To be specific, an operator $ T$ is a  \emph{$ w$-Haar multiplier} if it is of the form 
$ T \phi = \sum_{J\in \mathcal D } \varepsilon _J   \Delta ^{w} _{J} \phi $, with $ \lvert  \varepsilon _J\rvert=1 $. These operators are isometries on $ L ^2 (w)$.  The multiplier we need has 
$ \varepsilon _J = \operatorname {sgn} (\langle \phi , h ^{w} _{J} \rangle _{w})$, and $ \overline  \phi = T \phi $. 

\section{Bounded Fluctuation and the Second Corona}  \label{s.43}

There are two estimates of the bounded fluctuation constant $ \mathbf{BF} $ that should be made, the easy estimate 
of $ \mathbf{BF} \lesssim   \mathbf B _{\Subset}$, and the difficult estimate of 
$ \mathbf{BF} \lesssim   \sqrt {\mathbf A_2}  + \boldsymbol \Psi $.  We turn to the second estimate, which is involved. 

Fix the data for the bounded fluctuation term.  $ F$ is an interval, and $ \textup{Child} (F)$ are the intervals inside $ F$; 
the function  $ f$ is of bounded fluctuation relative to this data, and $ \phi $ is adapted to $ \{F\}\cup \textup{Child} (F)$.  
We consider the difficult estimate, in which the Dini and testing  conditions dominate  the bounded fluctuation term.  Setting notation, we are  to show  \eqref{e.b3<}, which is the same as this estimate.  
\begin{align} 
B _{\textup{stop}} (f, \phi ) & := 
 \sum_{I \;:\; \pi _{\mathcal F} I=F}  \sum_{ \substack{J \;:\; J\Subset I \\ \pi _{\mathcal F} J=F}} 
  \mathbb E _{I_J} ^{\sigma } \Delta ^{\sigma } f \cdot \langle H _{\sigma } I_J , \Delta ^{w} _{J} \phi  \rangle _{w}
\\  
 \label{e.43<} 
\bigl\lvert B_{\textup{stop}} (f, \phi)   \bigr\rvert
&\lesssim \bigl\{  \mathbf H + \boldsymbol \Psi  \}  
\bigl\{\gamma (F) \sigma (F) ^{1/2} + \lVert f\rVert_{\sigma } \bigr\}
\lVert  \phi \rVert_{w} \,. 
\end{align}
The origins of this argument are derived from \cite{1003.1596}, as modified in \cite{1001.4043}; these papers refer to 
this term as the \emph{stopping term.} We will again 
find simplifications by the use of the Calder\'on-Zygmund corona.   We define here the Dini corona.

\begin{definition}\label{d.stopping}  Let $ I_0\subset F$. 
We set $ \mathcal S (I_0)$  to be the maximal  subintervals $ S \subsetneq I_0$ such that
\begin{gather}\label{e.badEnergy}
\Psi_w(I_0, S) ^2  \ge 4  \boldsymbol \Psi ^2  \sigma (S) \,. 
\\
\Psi_w(I_0, S) ^2 
:= \sup \psi (s) ^{-2}  \sum_{ j =1 } ^{\infty } \sum_{k=1} ^{\infty } 
 \mathsf P (\sigma \cdot (I_0 - I_j ), I_{j,k}) ^2 \mathsf E (w , I_{j,k}) ^2 w (I_{j,k})
\end{gather}
where the supremum is formed over the various data that enter into Definition~\ref{d.side}, to wit: 
\begin{itemize}
\item $ \{I_j \;:\; j\ge 1\}$ is a sub-partition of $ S$ into intervals; 
\item $ \{I _{j,k} \;:\; k\ge 1 \}$ is a sub-partition of $ I _{j}$ into good  intervals, 
\item  $ s\ge r$ is an integer and $ \lvert  I _{j,k}\rvert < 2 ^{-s} \lvert  I _{j}\rvert  $, for all $ j,k$. 
  \end{itemize}
We then set $ \mathcal  S := \bigcup _{s=0} ^{\infty } \mathcal S _s$, where $ \mathcal S_0= \{F\}$, and 
inductively, $ \mathcal S _{s+1} = \bigcup _{S\in \mathcal S _{s} } \mathcal S (S)$.  
\end{definition}

It is important to note that despite the assumption of the Dini energy condition, there is no \emph{a priori} upper 
bound of the quantity $ \Psi_w(I_0, S)$ in terms of $ \sigma (S)$.   We also have the following elementary estimate, but critical, 
\begin{equation} \label{e.S<}
\sum_{S\in \mathcal S (I_0)} \sigma (S) < \tfrac 14 \sigma (I_0) \,. 
\end{equation}
 We have by \eqref{e.PsiI}, 
\begin{align*}
4 \boldsymbol \Psi ^2  \sum_{S\in \mathcal S}\sigma (S)  
 \le 
 \sum_{S\in \mathcal S} \Psi_w(I_0 ,S)  ^2 \le \boldsymbol \Psi ^2  \sigma (I_0) \,. 
\end{align*}
The constant $ \boldsymbol \Psi  ^2 $ divides out, so that  \eqref{e.S<} holds.

The Dini corona decomposition of $ \mathcal C _o (F)$ is then  the collection of pairs  $ \mathcal B (S)$,
of those $ (I,J) \in \mathcal C_o (F)$ such that $ J$ has $ \mathcal S$-parent $ S$.   We further write $ 
\mathcal B (S) $ as the disjoint union of $ \mathcal B_o (S)\dot\cup \mathcal B ^{o} (S)$ 
where $ \mathcal B _{o} (S)$ consists of those pairs $ (I,J) \in \mathcal B (S)$,
where $ I_J \subsetneq S$. 
This definition is carefully crafted so that \eqref{e.PsiI} fails for $ I_J$ if $(I,J)$ is in $B_0(S)$. 

We then split the term $  B _{\textup{stop}} (f, \phi )$ up according to the corona.  The argument of the Hilbert transform is 
also split up. Here, it is a basic fact that for each $ J$, the function 
\begin{equation} \label{e.bJ}
b_ J := \sum_{I \;:\; (I,J) \in \mathcal B_o (F) \cup \mathcal B ^{o} (F)}  \mathbb E _{ I_J} ^{\sigma} \Delta^{\sigma} _{I} f  \cdot  I_J 
\end{equation}
is supported on $ F$, and has $ L ^{\infty }$ norm dominated by $ 2$.   
The Hilbert transform is applied to $ I_J$.  Let $ S$ be the $ \mathcal S$-parent of $ J$, We will write this as 
\begin{align*}
I_J   = 
\begin{cases}
S - (S - I_J) &  \textup{ $I_J \subsetneq  S $, equivalently, $ (I,J) \in \mathcal B _{o} (S)$,} 
\\
(I_J - S) +S &  \textup{$I_J \supset S $, equivalently, $ (I,J)\in \mathcal B ^{o} (S)$}\,. 
\end{cases}
\end{align*}
And this permits us to write 
\begin{align}
 B _{\textup{stop}} (f, \phi) &= \sum _{S\in \mathcal S} B _{1} (f, \phi ; S) + B _{2} (f, \phi ; S) - B _{3} (f, \phi ; S) 
  \\ \label{e.51} 
B _{1} (f, \phi ; S) &:= 
\sum_{(I,J) \in \mathcal B(S)}
  \mathbb E ^{\sigma } _{I_J} \Delta ^{\sigma } _{I} f  \cdot \langle H _{\sigma } (S),  \Delta ^{w } _{J} \phi \rangle _{w } \,
  \\ \label{e.52}
B _{2} (f, \phi ; S) &:= 
 \sum_{(I,J) \in \mathcal B ^{o} (S) } 
  \mathbb E ^{\sigma } _{I_J} \Delta ^{\sigma } _{I} f  \cdot \langle H _{\sigma } (I_J-S) ,  \Delta ^{w } _{J} \phi \rangle _{w } \,
\\
\label{e.53}
B _{3} (f, \phi ; S) &:= 
 \sum_{(I,J) \in \mathcal B_o (S) } 
  \mathbb E ^{\sigma } _{I_J} \Delta ^{\sigma } _{I} f  \cdot \langle H _{\sigma } (S-I_J),  \Delta ^{w } _{J} \phi \rangle _{w } \,
\end{align}

\subsection{The control of $ B_{3}$} 
We take up the most delicate case of $  B_{3} (f, \phi ; S)$, showing that 
\begin{equation}\label{e.53<}
\lvert   B_{3} (f, \phi ; S)\rvert \lesssim \boldsymbol \Psi   \lVert  P ^{\sigma } _{S} f \rVert_{\sigma } 
\lVert P _{S} ^{w} \phi \rVert_{w} \,, \qquad S\in \mathcal S \,.  
\end{equation}
Here, the projection $  P ^{\sigma } _{S} f  $ is onto the span of the Haar functions $ h ^{\sigma } _{I}$ such that 
a child of $ I$ has $ \mathcal S$-parent $ S$, and $ P ^{w} _{S} \phi $ has an analogous definition.  Note that 
projections $ P ^{w} _{S}$ are pairwise orthogonal, while a given Haar function can only contribute to at most  two 
projections $ P ^{\sigma } _{S}$.  This and application of Cauchy-Schwartz will show that 
\begin{equation*}
\sum_{S\in \mathcal S} 
\lvert   B_{3} (f, \phi ; S)\rvert  \lesssim  \boldsymbol \Psi  \lVert    f \rVert_{\sigma } 
\lVert \phi \rVert_{w} \,, 
\end{equation*}
which is as required in \eqref{e.43<}.  

In the main estimate, we hold the relative lengths of $ I$ and $ J$ constant. It holds that 
\begin{align}\label{e.53s<}
\lvert B_{3,s} (f, \phi ; S)\rvert & := \Bigl\lvert 
\sum_{\substack{ (I,J) \in {\mathcal B_{0} (S)} \\  2 ^{s}\lvert  J\rvert = \lvert  I\rvert}} 
\mathbb E _{ I_J} ^{\sigma} \Delta^{\sigma} _{I} f \langle H _{\sigma}  (S-I_J), \Delta ^{w} _{J} \phi  \rangle_{w}
\Bigr\rvert 
\\&\lesssim \boldsymbol \Psi  \psi (s)
\lVert P ^{\sigma } _{S} f \rVert_{\sigma } \lVert P ^{w} _{S} \phi \rVert_{w}\,, \qquad  s>r\,. 
\end{align}
The constants $ \boldsymbol \Psi $ and $ \psi (s)$ are as in Definition~\ref{d.side}, and in particular, $ \sum_{s} \psi (s) \le 1$.  
This is summed over $ s$ to finish the proof of \eqref{e.53<}.

To prove the inequality above, we use this observation. 
For any  choice of sign, 
\begin{equation} \label{e.Eip}
\bigl\lvert  \mathbb E ^{\sigma } _{I_\pm} \Delta ^{\sigma} _{I} f  \bigr\rvert 
= 
\lvert  \langle f, h ^{\sigma } _{I} \rangle _{\sigma } \rvert 
\bigl\lvert  \mathbb E ^{\sigma } _{I _{\pm}}  h ^{\sigma } _{I} \bigr\rvert 
\le 
\lvert  \langle f, h ^{\sigma } _{I} \rangle _{\sigma } \rvert \sigma (I _{\pm}) ^{-1/2} \,.
\end{equation}
This is the elementary property of the Haar functions of \eqref{e.h<}.  
We apply Cauchy-Schwartz in $ I$ to the expression below 
\begin{align*}
\lvert  B_{3,s} (f, \phi ; S)\rvert ^2 
& \lesssim 
\lVert P _{S} ^{\sigma } f \rVert_{\sigma } ^2 
\times 
\sum_{ I \;:\; \pi _{\mathcal S} I=S}  \sigma (I_J) ^{-1}  \Biggl[ 
\sum_{J \;:\; \substack{ (I,J) \in {\mathcal B_{1}(S)} \\  2 ^{s}\lvert  J\rvert = \lvert  I\rvert}}  
\bigl\lvert 
\langle H _{\sigma}  (S- I_J) , \Delta ^{w} _{J} \phi  \rangle_{w}
\bigr\rvert \Biggr] ^2 
\\
& \lesssim 
\lVert P _{S} ^{\sigma } f_1 \rVert_{\sigma} ^2  \cdot 
\lVert P _{S} ^{w } \phi  \rVert_{w} ^2 
\times  M _s ^2 
\\
M ^2_s   & := 
\sup _{ I \;:\; \pi _{\mathcal S} I=S} \sup _{\theta \in \{\pm\}}  \sigma (I _{\theta }) ^{-1} 
\sum_{J \;:\; \substack{ (I,J) \in {\mathcal B_o(S)} \\  2 ^{s}\lvert  J\rvert = \lvert  I\rvert \\ I_J= I_ \theta }} 
\langle H _{\sigma}  (S- I_ \theta  ) , h ^{w} _{J} \rangle_{w} ^2  \,. 
\end{align*}
Here,  to get the bound in terms of $ \lVert P _{F} ^{w } \phi  \rVert_{w} ^2 $, we use the fact that for 
fixed $ J \subset S$, there is a unique $ I$ so that $ J\subset I $, $ 2 ^{s} \lvert  J\rvert=\lvert  I\rvert   $, and $ (I,J) \in \mathcal B_o $. 
We turn our attention to $ M_s$. 
Applying  \eqref{e.mono+P},  and  the definition of the  $ \Psi $-functional in Definition~\ref{d.side}, that we have 
\begin{align*} \sigma (I _{\theta }) ^{-1}
\sum_{J \;:\; \substack{ (I,J) \in {\mathcal B_o(S)} \\  2 ^{s}\lvert  J\rvert = \lvert  I\rvert\\ I_J= I_ \theta }} 
\langle H _{\sigma}  (S- I_ \theta ) , h ^{w} _{J} \rangle_{w} ^2  
&\le \boldsymbol \Psi ^2  \psi (s) ^2  \sigma (I _{\theta }) ^{-1}  \Psi_w(F, I _{\theta })  \le 4 \psi (s) ^2 \boldsymbol \Psi ^2  \,.
\end{align*}
In the last inequality, it is decisive that the interval $ I_J \subsetneq S$, hence  \emph{fails} the condition  \eqref{e.badEnergy}. 

\subsection{The Control of $ B_{2}$.}
For $ S\in \mathcal S$, let $ P ^{w} _{S}$ be the projection onto the span of Haar functions $ h ^{w} _{J}$ with $ \pi _{\mathcal S} J=S$.  
By Lemma~\ref{mono}, there is a function $\overline  \phi $, a $ w$-Haar multiplier of $ \phi $, so that 
\begin{align*}
\lvert B_{2} (f, \phi ; S) \rvert
& \lesssim   \langle  H _{\sigma } (F-S),  P ^{w} _{S} \overline  \phi \rangle _{w} 
\\
& = 
  \langle  H _{\sigma } F,    P ^{w} _{S} \overline \phi \rangle _{w} 
-   \langle  H _{\sigma } S,     P ^{w} _{S} \overline \phi \rangle _{w} 
\end{align*}
Now for the first term on the right above,
\begin{align}
\sum_{S\in \mathcal S - \{F\}}  \langle  H _{\sigma } F,   P ^{w} _{S} \overline \phi \rangle _{w}  
& = 
\Bigl\langle H _{\sigma } F , \sum_{S\in \mathcal S - \{F\}}   P ^{w} _{S}\overline  \phi
\Bigr\rangle_{w}
\label{e.52<} \le \mathbf H \sigma (F) ^{1/2} \lVert P ^{w} _{F} \phi \rVert_{w} 
\end{align}
And, for the second term on the right above, 
\begin{align*}
\sum_{S\in \mathcal S} \lvert  \langle  H _{\sigma } S,    P ^{w} _{S} \overline \phi \rangle _{w} \rvert
& \le \mathbf H 
\sum_{S\in \mathcal S}  \sigma (S) ^{1/2} \lVert P ^{w} _{S} \phi \rVert_{w} 
\\
& \le \mathbf H 
\Biggl[
\sum_{S\in \mathcal S} \sigma (S) \times 
\sum_{S\in \mathcal S} \lVert P ^{w} _{S} \phi \rVert_{w}  ^2 
\Biggr] ^{1/2} 
\\
& \lesssim \mathbf H \sigma (F) ^{1/2}  \lVert P ^{w} _{F}\rVert_{w}  \,. 
\end{align*}
Here, we have appealed to the critical estimate \eqref{e.S<}.  This with \eqref{e.52<} completes the bound of 
$ B_{2} (f, \phi)$.

\subsection{The Control of $ B_{1}$.}
The bound for $ B_{1}$, as defined in \eqref{e.51}, is straightforward.  Recalling our observation that the   functions   $ b_J$ in 
\eqref{e.bJ} are bounded in $ L ^{\infty }$ by $ \gamma (F)$, one can appeal directly to the testing condition to see that 
\begin{align} \label{e.51<}
\lvert  B_{1} (f, \phi ; S)\rvert &  \lesssim 
\mathbf H  \cdot M \cdot    \sigma (S) ^{1/2} \lVert P ^{w} _{S} \phi \rVert_{w} \,, 
\\
M & := \sup _{J \;:\; \pi _{\mathcal S} J =S} 
\Bigl\lvert \sum_{ I \;:\; (I,J) \in \mathcal B(S)}
  \mathbb E ^{\sigma } _{I_J} \Delta ^{\sigma } _{I} f  \Bigr\rvert \,. 
\end{align}
But $ M \lesssim  \gamma (F)$.

Using the orthogonality of the projections $ P ^{w} _{S}$, and the condition  on the stopping intervals \eqref{e.S<}, one sees that 
\begin{align}    
\sum_{S\in \mathcal S} \lvert  B_{1} (f, \phi ; S)\rvert & \lesssim  
\mathbf H \gamma (F) 
\Biggl[ \sum_{S\in \mathcal S} \sigma (S)  \times 
\sum_{S\in \mathcal S} 
\lVert P ^{w} _{S} \phi \rVert_{w} ^2 
\Biggr] ^{1/2} 
\\
& \lesssim \mathbf H \gamma (F) \sigma (F) ^{1/2}  \lVert P ^{w} _{F} \phi \rVert_{w}  \,. 
\end{align}
This is as required by \eqref{e.43<}.

\subsection{A Second Estimate}
We have completed the proof of \eqref{e.43<}, and turn to the easy estimate $ \mathbf{BF} \lesssim \mathbf B _{\Subset} + \mathbf H$.  
Indeed, if we are given a function $ f$ and $ \phi $ with which we are to test the bounded fluctuation condition, 
note that the sum that appears in \eqref{e.BF} reduces to  $ B _{\textup{stop}} (f, \phi)$.   
But,  we have 
\begin{align*}
\bigl\lvert  
B _{\textup{stop}} (f, \phi) - B _{\Subset} (f, \phi )
\bigr\rvert 
& 
\le 
\Biggl\lvert 
\sum_{I \;:\; I\supsetneq F} \sum _{J \;:\; J\Subset F}
\mathbb E _{F} \Delta ^{\sigma } _{I} f \langle H _{\sigma } (I_J )  , g\rangle
\Biggr\rvert
\\
& 
\le \bigl\lvert  \mathbb E _{F} f \langle H _{\sigma } F , g\rangle
\bigr\rvert
+
\Biggl\lvert 
\sum_{I \;:\; I\supsetneq F}  \mathbb E _{F} \Delta ^{\sigma } _{I} f \langle H _{\sigma } (I_ { F}-F)  , g\rangle
\Biggr\rvert
\end{align*}
The first term is bounded by $\mathbf H  \sigma (F) ^{1/2} \lVert g\rVert_{w}$.  
The second term is zero, since $ f$ is supported on $ F$, hence 
$ \sum_{I \;:\; I\supsetneq F}  \mathbb E _{F} \Delta ^{\sigma } _{I} f \cdot  (I_ {F} - F) \equiv 0$.

\smallskip
We argue that  $ \mathbf H \lesssim \sqrt {\mathbf A_2} +  \mathbf W+ \mathbf B _{\Subset}$, 
which completes our proof of \eqref{e.<3}, that is $ \mathbf {BF} \lesssim  \sqrt {\mathbf A_2} +   \mathbf W+ \mathbf B _{\Subset}$.

Let us fix an interval $ I ^0$, and function $ \phi  \in L ^2 (w)$ supported on $ I^0$, for which we are to estimate $ \langle H _{\sigma } I^0, \phi  \rangle _{w}$ in terms of the $A_2 $ constant, the weak-boundedness constant, and the split form constant $ \mathbf B _{\Subset}$.      
By appealing to the weak-boundedness constant $ \mathbf W$, we can assume that $ \phi $ has $ w$-integral zero. 
We are also free to consider (random) dyadic grids $ \mathcal D$, with respect to which $ I ^{0}$ is dyadic.  It follows that we can take 
$ \phi $ in the linear span of $ \{h ^{w} _{J } \;:\; J \subset I ^{0},\  \textup{ $ J$ is good}\}$.

By appealing to Theorem~\ref{t.preSplit}, it suffices to consider the sum of the  two forms $ B _{\Supset} ( I ^{0}, \phi )+ B _{\Subset} ( I ^{0}, \phi )$. 
But $  B _{\Supset} ( I ^{0}, \phi )$ is zero: If $ \Delta ^{w} _{J} \phi \neq 0$, then $ J\subset I^0$, and so for any $ I\subset J$, 
we have $ \Delta ^{\sigma } _{I} I ^{0}=0$. The form $  B _{\Subset} ( I ^{0}, \phi )$ is controlled by the constant $ \mathbf B _{\Subset}$.  So our argument is complete.  

\section{Dominating Functional Energy} 

We have two estimates of the functional energy constant to prove, that $ \mathbf F \lesssim \mathbf H+ \boldsymbol\Psi $ and 
$ \mathbf F \lesssim  \mathbf H +\mathbf B _{\Subset}$.  The data for these considerations are  
a non-negative function $ f$, its stopping intervals $ \mathcal F$,  and a sequence of functions $ \{g_F \;:\; F\in \mathcal F\}$, 
with  $ g_F$ $ \mathcal F$-adapted to $ F$.  
We assume that $ \mathbf H$ and  $ \mathbf B _{\Subset}$ are finite, and consider 
the expression $  B _{\Subset} (f,  g_F) $. By the monotonicity property Lemma~\ref{mono}, 
it suffices to assume that $ f$ takes the value $ \gamma (F)$ on each set of the form $ F - \bigcup \textup{Child} _{\mathcal F}(F)$. 
The point to observe is that if we write 
\begin{equation*}
\widetilde f _{F} = f \cdot (F - \bigcup\textup{Child}_{\mathcal F}(F)) + \sum_{F'\in \textup{Child} _{\mathcal F} (F)} \mathbb E ^{\sigma } _{F'} f \cdot F' \,, 
\end{equation*}
we can appeal to the bounded fluctuation property to write 
\begin{equation*}
\bigl\lvert B _{\Subset} (f \cdot F, g_F)\bigr\rvert \lesssim \mathbf {BF} 
\lVert \widetilde f _{F} \rVert_{\sigma } \lVert g_F\rVert_{w} 
\lesssim \{  \sqrt {\mathbf A_2} + \mathbf B _{\Subset}\} \lVert \widetilde f _{F} \rVert_{\sigma } \lVert g_F\rVert_{w}  \,. 
\end{equation*}
Applying Cauchy-Schwartz to the last two products of norms, we get 
\begin{align*}
\sum_{F\in \mathcal F}  \lVert \widetilde f _{F} \rVert_{\sigma } \lVert g_F\rVert_{w}  
& \le
\Biggl[ 
\sum_{F\in \mathcal F}  \lVert \widetilde f _{F} \rVert_{\sigma } ^2 
\Biggr] ^{1/2} \lVert g\rVert_{w}
\\
& \lesssim \bigl[ 
 \sum_{F\in \mathcal F}  \sigma (F) (\mathbb E ^{\sigma} _{F} \lvert  f\rvert ) ^2
 \bigr] ^{1/2} \lVert g\rVert_{w} \lesssim \lVert f\rVert_{\sigma } \lVert g\rVert_{w}
\end{align*}
where we write $ g= \sum_{F\in \mathcal F} g_F$.  Note that the condition that the functions $ g_F$ be $ \mathcal F$-adapted implies 
that they are orthogonal in $ F\in \mathcal F$.  

We conclude that on the assumption that $  \sqrt {\mathbf A_2} $ and $ \mathbf B _{\Subset}$ are finite, we have 
\begin{align*}
	\sum _{F\in \mathcal F} \bigl\lvert B _{\Subset} ( f - (f \cdot F), g_F)\bigr\rvert \lesssim \{\sqrt {\mathbf A_2} + \mathbf B _{\Subset}\} 
\lVert f\rVert_{\sigma } \lVert g\rVert_{w} \,. 
\end{align*}
With the assumptions on the functions $ g_F$, in the definition of functional energy, and the inequalities \eqref{e.Elower} 
are at our disposal.
This must be done in a way that controls the right-hand side of that inequality.  

Let  Haar multiplier $ \overline g_F$ of $ g_F$, chosen so that $ \langle x, \Delta _{J} ^{w} g_F \rangle _{w} \ge 0$ for all $ J$. 
We have from \eqref{e.Elower} the estimate 
\begin{gather} \label{e.oop}
 \sum_{J ^{\ast} \in \mathcal J ^{\ast} (F)} 
 \mathsf  P (( f - (f \cdot F)) \sigma , J ^{\ast} ) \bigl\langle \frac {x} {\lvert  J ^{\ast} \rvert },  g_F \bigr\rangle_{w}  
 \lesssim 
 B _{\Subset} ( f - (f \cdot F), \overline   g_F) + D (f,g_F)
 \\ \label{e.D}
D (f,g_F) := 
\sum_{t=1}^\infty 
\gamma (\pi ^{t} _{\mathcal F} F)  \sum_{\substack{J \;:\; J\Subset F\\ \pi _{F} J=F}} 
\Bigl[ \frac {\lvert  J\rvert} {\lvert  \pi ^{t} F\rvert } \Bigr] ^{1-\epsilon}P(\sigma\pi^{t} _{\mathcal F} F , J) \sqrt {w (J)} \lvert \langle g_F, h ^{w} _{J} \rangle _{w}\rvert 
\end{gather}
Note that we are appealing to the specific form of $ f$ to obtain the form for $ D (f, g_F)$.  
If the first term on the right in \eqref{e.oop} is the larger, we are finished with the proof. Otherwise, 
we will use the $ A_2$ constant to control the terms $ D (f,g_F)$.  

Note that by repeated application of  Cauchy-Schwartz in different variables, it holds that 
\begin{align}
	D (f,g_F) ^2   & \le \lVert g_F\rVert_{w} ^2 
	\Biggl\lvert 
\sum_{t=1}^\infty 
\gamma (\pi ^{t} _{\mathcal F} F)  \sum_{\substack{J \;:\; J\Subset F\\ \pi _{F} J=F}} 
\Bigl[ \frac {\lvert  J\rvert} {\lvert  \pi ^{t} F\rvert } \Bigr] ^{1-\epsilon}P(\sigma\pi^{t} _{\mathcal F} F , J) \sqrt {w (J)}
\Biggr\rvert ^2 
\\
& \lesssim 
\lVert g_F\rVert_{w} ^2 
	   \sum_{t=1}^\infty \gamma (\pi ^{t} _{\mathcal F} F) ^2  t ^2 
	\Biggl\lvert  \sum_{\substack{J \;:\; J\Subset F\\ \pi _{F} J=F}} 
	\Bigl[ \frac {\lvert  J\rvert} {\lvert  \pi ^{t} F\rvert } \Bigr] ^{1-\epsilon}P(\sigma\pi^{t} _{\mathcal F} F , J) \sqrt {w (J)}
\Biggr\rvert ^2 
\\
& \lesssim \lVert g_F\rVert_{w} ^2 
	   \sum_{t=1}^\infty \gamma (\pi ^{t} _{\mathcal F} F) ^2  t ^2  
	     \sum_{\substack{J \;:\; J\Subset F\\ \pi _{F} J=F}} 
	   \Bigl[ \frac  {\lvert  F\rvert }  { \lvert  J\rvert }  \Bigr] ^{1 + \epsilon }
	   \Bigl[ \frac {\lvert  J\rvert} {\lvert  \pi ^{t} F\rvert } \Bigr] ^{2-2\epsilon}P(\sigma\pi^{t} _{\mathcal F} F , J) ^2   {w (J)}
\\
& \lesssim  \mathbf A_ 2 \lVert g_F\rVert_{w} ^2 
\sum_{t=1}^\infty \gamma (\pi ^{t} _{\mathcal F} F) ^2  t ^2  2 ^{-t (1 - \epsilon )}
\sum_{\substack{J \;:\; J\Subset F\\ \pi _{F} J=F}} 
	   \Bigl[ \frac {\lvert  J\rvert} {\lvert  \pi ^{t} F\rvert } \Bigr] ^{1-3\epsilon}P(\sigma\pi^{t} _{\mathcal F} F , J) 
	   \\ \label{e.D<}
& \lesssim \mathbf A_2  \lVert g_F\rVert_{w} ^2 
	   \sum_{t=1}^\infty   2 ^{-t/4} \gamma (\pi ^{t} _{\mathcal F} F) ^2   \sigma (\pi ^{t} _{\mathcal F} F) \,. 
\end{align}
This holds if  $ 1- 3 \epsilon > \tfrac 13$, which we can assume is true, as this choice of $ \epsilon $ is only 
associated with the definition of being good. For any $ 0<\epsilon < 1 $, we can make a choice of integer $ r$ so that it 
suffices to consider only $ (\epsilon ,r)$-good intervals.  
From the (quasi)-orthogonality conditions on $ f$ and $ \{g_F\}$, it is then easy to see that 
\begin{equation*}
	\sum_{F\in \mathcal F} 	D (f,g_F) \lesssim \sqrt {\mathbf A_2} \lVert f\rVert_{\sigma } \Bigl\lVert \sum_{F\in \mathcal F} g_F  \Bigr\rVert_{w} 
\end{equation*}
This completes our proof of $ \mathbf F \lesssim \sqrt {\mathbf A_2} +\mathbf B _{\Subset}$.
Indeed, to verify the last inequality, let us write $ g= \sum_{F\in \mathcal F} g_F$, and apply \eqref{e.D<}. 
\begin{align*}
	\sum_{F\in \mathcal F} 	D (f,g_F) & \lesssim 
	\mathbf A_2 \sum_{F\in \mathcal F} \lVert g_F \rVert_{w}
	\Biggl[	   \sum_{t=1}^\infty   2 ^{-t/4} \gamma (\pi ^{t} _{\mathcal F} F) ^2   \sigma (\pi ^{t} _{\mathcal F} F)  \Biggr] ^{1/2} 
	\\
	& \lesssim 
	\sqrt {\mathbf A_2}  \lVert g\rVert_{w} 
		\Biggl[\sum_{F\in \mathcal F}\sum_{t=1}^\infty   2 ^{-t/4} \gamma (\pi ^{t} _{\mathcal F} F) ^2   \sigma (\pi ^{t} _{\mathcal F} F)  \Biggr] ^{1/2} 
\\
	& \lesssim 
\sqrt {\mathbf A_2} \lVert g\rVert_{w} 
	\Biggl[\sum_{F\in \mathcal F}  \gamma   F) ^2   \sigma (  F)  \Biggr] ^{1/2}  \lesssim
	\sqrt {\mathbf A_2} \lVert f\rVert_{\sigma } \lVert g\rVert_{w}\,. 
\end{align*}

\medskip 

It remains to prove $ \mathbf F \lesssim \boldsymbol \Psi $, which follows from an elementary application of the side condition.  
We reorganize the sum around the $ \mathcal F$-ancestors of an interval $ F \in \mathcal F$
\begin{align}
	\sum_{F\in \mathcal F} \sum_{J ^{\ast} \in \mathcal J ^{\ast} (F)}  &
	P ((f ({\mathbb R -F}) \sigma , J ^{\ast} ) \bigl\langle  \frac {x} {\lvert  J ^{\ast} \rvert }, g_F {J ^{\ast} }\bigr\rangle _{w}
	\\& \le 
	\sum_{F\in \mathcal F}
	\mathbb E _{F} ^{\sigma } f \cdot 
	\sum_{t=2}  ^{\infty } 
	\sum_{F' \in \mathcal F \;:\; \pi ^{t} _{\mathcal F} = F} 
	\sum_{J ^{\ast} \in \mathcal J ^{\ast} (F')}  	P (\sigma (F- F'), J ^{\ast} ) 
	\bigl\langle  \frac {x} {\lvert  J ^{\ast} \rvert }, g_{F'} {J ^{\ast} }\bigr\rangle _{w} 
	\\  \label{e.F2}
& \lesssim \boldsymbol \Psi 	\sum_{F\in \mathcal F}
\mathbb E _{F} ^{\sigma } f \cdot  \sigma (F) ^{1/2} 
	\sum_{t=2}  ^{\infty }  \psi (t) 
	\Biggl[	\sum_{F' \in \mathcal F \;:\; \pi ^{t} _{\mathcal F} = F} 
	 \sum_{J ^{\ast} \in \mathcal J ^{\ast} (F')}  
	 \lVert g_F \cdot J ^{\ast} \rVert_{w} ^2  
	\Biggr] ^{1/2} 
\end{align}
Here, we have appealed to \eqref{e.PsiI} with the data $ I_0 \leftarrow F$, $ \{I_j\} \leftarrow \{F' \in \mathcal F \;:\; \pi ^{t} _{\mathcal F} = F \}$, and $ \{I _{j,k}\} \leftarrow \bigcup \{ \mathcal J ^{\ast} (F') \;:\; F' \in \mathcal F \;:\; \pi ^{t} _{\mathcal F} = F\}$. 
It follows that we have $ \{I _{j,k}\}$ are $ (\epsilon ,r)$-good, and $ \lvert  I _{j,k}\rvert \le 2 ^{-t} \lvert  I _{j}\rvert  $, 
and therefore, we have 
\begin{equation*}
	\sum_{F' \in \mathcal F \;:\; \pi ^{t} _{\mathcal F} = F} 
	P (\sigma (F- F'), J ^{\ast} )  ^2 \sum_{J ^{\ast} \in \mathcal J ^{\ast} (F')}  \mathsf E (w, J ^{\ast} ) ^2  w ( J ^{\ast} )
	\le \boldsymbol \Psi ^2 \psi (t) \sigma (F) \,. 
\end{equation*}
The inequality \eqref{e.F2} then follows from Lemma~\ref{mono}, and the assumption that $ \mathbb E _{J ^{\ast} } ^{w} g _F=0$ for all $ F 
\in \mathcal F $, and  $ J ^{\ast} \in \mathcal J ^{\ast} (F) $.  In particular, we have 
\begin{equation*}
\bigl\lvert 
\bigl\langle  \frac {x} {\lvert  J ^{\ast} \rvert }, g_{F'} {J ^{\ast} }\bigr\rangle _{w}
\bigr\rvert \le \mathsf E (w,J ^{\ast} )  w (J ^{\ast} ) ^{1/2}   \lVert g_F \cdot J ^{\ast} \rVert_{w} \,. 
\end{equation*}

Now, we have the quasi-orthogonality 
estimate \eqref{e.F<}. 
Using the condition $ \sum_{t\ge 2} \psi  (t) =1$, we have 
\begin{align*}
	\sum_{F\in \mathcal F} &
\Biggl[ 
	\sum_{t=2}  ^{\infty }  \psi (t) 
	\Biggl[	\sum_{F' \in \mathcal F \;:\; \pi ^{t} _{\mathcal F} = F} 
	 \sum_{J ^{\ast} \in \mathcal J ^{\ast} (F')}  
	 \lVert g_F \cdot J ^{\ast} \rVert_{w} ^2 
\Biggr] ^{1/2}  \Biggr] ^2 
\\	& 
\le
	\sum_{F\in \mathcal F}
	\sum_{t=2}  ^{\infty }  \psi (t) 
	\sum_{F' \in \mathcal F \;:\; \pi ^{t} _{\mathcal F} = F} 
	 \sum_{J ^{\ast} \in \mathcal J ^{\ast} (F')}  
	 \lVert g_{F'} \cdot J ^{\ast} \rVert_{w} ^2 
	 \le \lVert g_F \rVert_{w} ^2 \,.   
\end{align*}

\section{The Remaining Estimates} 

\subsection{The Term $ B_{1,2}$.}\label{s.12}
The term $ B_{1,2}$ is defined in \eqref{e.12}; we are proving the estimate \eqref{e.12<}, where the constants on the 
right are the $ A_2$-constant and the weak-boundedness constant in \eqref{e.W}. 

Note that by the definition of the weak-boundedness constant,  we have 
\begin{align*}
\sum_{ (I,J) \;:\; \substack{ 3 I \cap 3 J \neq \emptyset \\  2 ^{-r} \lvert  I\rvert\le \lvert  J\rvert\le 2 ^{r} \lvert  I\rvert    }} 
\langle H _{\sigma} \Delta ^{\sigma}_I f , \Delta ^{w} _{J} \phi  \rangle _{w}  
& \le 
\mathbf W 
\sum_{ (I,J) \;:\; \substack{ 3 I \cap 3 J \neq \emptyset \\  2 ^{-r} \lvert  I\rvert\le \lvert  J\rvert\le 2 ^{r} \lvert  I\rvert    }} 
\bigl\lvert  \langle  f, h ^{\sigma } _{I} \rangle _{\sigma } 
\langle \phi , h ^{w } _{J} \rangle _{w }
\bigr\rvert 
\\
& \lesssim \mathbf W \lVert f \rVert_{\sigma } \lVert \phi \rVert_{w } \,. 
\end{align*}

If $ 3I \cap 3 J = \emptyset $, and $ \lvert  J\rvert \lesssim \lvert  I\rvert  $, we have the trivial consequence of \eqref{e.mono+P}, 
\begin{equation}\label{e.IJ}
\lvert \langle H _{\sigma } (I), h ^{w } _{J} \rangle _{w } \rvert
\lesssim \sigma (I) \frac { \sqrt {w (J)} \lvert  J\rvert } { (\lvert  J\rvert + \textup{dist} (I,J)) ^2 }
\end{equation}

Now, the estimate \eqref{e.IJ} implies that the remaining part of $ B_{1,2}$ is controlled by 
\begin{align*}
\sum_{ (I,J) \;:\; \substack{ 3 I \cap 3 J = \emptyset \\  2 ^{-r} \lvert  I\rvert\le \lvert  J\rvert\le 2 ^{r} \lvert  I\rvert    }} 
\langle H _{\sigma} \Delta ^{\sigma}_I f , \Delta ^{w} _{J} \phi  \rangle _{w}  
& \lesssim 
\sum_{ (I,J) \;:\; \substack{ 3 I \cap 3 J \neq \emptyset \\  2 ^{-r} \lvert  I\rvert\le \lvert  J\rvert\le 2 ^{r} \lvert  I\rvert    }} 
\bigl\lvert  \langle  f, h ^{\sigma } _{I} \rangle _{\sigma } 
\langle \phi , h ^{w } _{J} \rangle _{w } \bigr\rvert \cdot  \alpha (I,J)
\\
\alpha (I,J) & := 
\frac {\sigma (I) ^{1/2}  w (J) ^{1/2}   \lvert  J\rvert }    { (\lvert  J\rvert + \textup{dist} (I,J)) ^2 } \,. 
\end{align*}
The last coefficients satisfy the assumptions of Schur's test, with the relevant constant controlled by the $ A_2$ constant. Namely, for any $ I$ we have by the Cauchy-Schwartz inequality, 
\begin{align}
\Biggl[\sum_{ J \;:\; \substack{ 3 I \cap 3 J = \emptyset \\  2 ^{-r} \lvert  I\rvert\le \lvert  J\rvert\le 2 ^{r} \lvert  I\rvert    }} 
\alpha (I,J)  \Biggr] ^2 
& \lesssim   \label{e.schur}
\sum_{ (I,J) \;:\; \substack{ 3 I \cap 3 J = \emptyset \\  2 ^{-r} \lvert  I\rvert\le \lvert  J\rvert\le 2 ^{r} \lvert  I\rvert    }}  
\frac { \lvert  J\rvert } 
 { (\lvert  J\rvert + \textup{dist} (I,J)) ^2 }  
 \\& \qquad  \times 
\sum_{ (I,J) \;:\; \substack{ 3 I \cap 3 J = \emptyset \\  2 ^{-r} \lvert  I\rvert\le \lvert  J\rvert\le 2 ^{r} \lvert  I\rvert    }}  
  \frac {\sigma (I)   w (J)  }  { (\lvert  J\rvert + \textup{dist} (I,J)) ^2 }  
\lesssim \mathbf A_2 ^2  \,. 
\end{align}
This completes the proof of \eqref{e.12<}. 

\subsection{The Term $ B_{2,1}$}\label{s.21}

For the term $ B_{2,1}$ of \eqref{e.21}, we prove \eqref{e.21<}.  In this sum, we have $ 2 ^{r} \lvert  J\rvert \le \lvert  I\rvert  $, 
and $ J\cap 3I= \emptyset $.  We further restrict the length of $ J$ to be $ 2 ^{-s} \lvert  I\rvert $, for $ s\ge r$.  Using the 
estimate \eqref{e.IJ}, we can apply the Schur test to see that 
\begin{align*}
\sum_{ (I,J) \;:\; \substack{ 3 I \cap 3 J = \emptyset \\  2 ^{-r} \lvert  I\rvert\le \lvert  J\rvert\le 2 ^{r} \lvert  I\rvert    }} 
\langle H _{\sigma} \Delta ^{\sigma}_I f , \Delta ^{w} _{J} \phi  \rangle _{w}  
& \lesssim 2 ^{-s/2} \mathbf A_2 \lVert f\rVert_{\sigma } \lVert \phi \rVert_{w } \,. 
\end{align*}
Indeed, the only point to observe is that for the analog of the first term on the right in \eqref{e.schur}, that we have the geometric 
decay claimed above.  

\subsection{The Term $ B_{2,2}$}\label{s.22}
For the term $ B_{2,2}$ of \eqref{e.22}, we prove the second half of  \eqref{e.21<}. 
The distinction between this case and the previous is that this is the 'local' but not 'inside' part.  
For integers $ s\ge1$, we prove  
\begin{equation}\label{e.}
\sum_{I} \sum _{ J \;:\; \substack{ 2 ^{s} \lvert  J\rvert= \lvert  I\rvert  \\ J \subset 3I \backslash I  }} 
\langle H _{\sigma} \Delta ^{\sigma}_I f , \Delta ^{w} _{J} \phi  \rangle _{w}  
\lesssim 2 ^{-s/2}  \mathbf A_2 \lVert f\rVert_{\sigma } \lVert \phi \rVert_{w } \,. 
\end{equation}
But the essential points are on the one hand  that we have 
\begin{equation*}
\sum_{I} \sum _{ J \;:\; \substack{ 2 ^{s} \lvert  J\rvert= \lvert  I\rvert  \\ J \subset 3I \backslash I  }}  
\langle \phi ,h ^{w } _{J} \rangle _{w } ^2 \lesssim \lVert \phi \rVert_{w } ^2 \,, 
\end{equation*}
since the length and location  of $ J$ is specified by $ I$.  
And on the other hand, that we have the estimate 
\begin{equation}
\sum _{ J \;:\; \substack{ 2 ^{s} \lvert  J\rvert= \lvert  I\rvert  \\ J \subset 3I \backslash I  }}
\langle  H _{\sigma } h ^{\sigma } _{I}, h ^{w } _{J} \rangle _{w } ^2 
\lesssim \mathbf A_2 ^2 
2 ^{-s}  \,. 
\end{equation}

To be concrete, let $ \theta, \theta ' \in \{\pm\}$, and consider the sum 
\begin{equation*} 
\bigl\lvert \mathbb E _{I _{\theta }} ^{\sigma } h _{I} ^{\sigma }  \bigr\rvert ^2 
\sum _{ J \;:\; \substack{ 2 ^{s} \lvert  J\rvert= \lvert  I\rvert  \\ J \subset I + (\theta ' \lvert  I\rvert)   }}
\langle  H _{\sigma }  I _{\theta }, h ^{w } _{J} \rangle _{w } ^2 
\end{equation*}
Now, $\bigl\lvert \mathbb E _{I _{\theta }} ^{\sigma } h _{I} ^{\sigma }  \bigr\rvert \le \sigma (I _{\theta }) ^{-1/2} $, which is the estimate \eqref{e.h<}. 
And, we can apply \eqref{e.mono+P}  to see that 
\begin{align*}
S (I) = \sum _{ J \;:\; \substack{ 2 ^{s} \lvert  J\rvert= \lvert  I\rvert  \\ J \subset I + (\theta ' \lvert  I\rvert)   }}
\langle  H _{\sigma }  I _{\theta }, h ^{w } _{J} \rangle _{w } ^2 
& \lesssim 
\sum _{ J \;:\; \substack{ 2 ^{s} \lvert  J\rvert= \lvert  I\rvert  \\ J \subset I + (\theta ' \lvert  I\rvert)   }} 
\mathsf P (\sigma \cdot I _{\theta }, J ) ^2  w ( J ) 
\end{align*}
We ignore the contribution from energy.  But, the intervals $ J$ are good.  This means that $ \textup{dist} (J, I _{\theta }) 
\ge \lvert  I _{\theta }\rvert ^{1- \epsilon } \lvert  J\rvert ^{\epsilon } $, which fact we use to estimate the Poisson integral above as  
follows.  

\begin{lemma}\label{l.donotuse} Let $ J \subset I \subset I'$, $ \lvert  J\rvert = 2 ^{-s} \lvert  I\rvert  $,  with $ s\ge r$ and $ J$ good, then 
\begin{equation}  \label{e.donotuse}
\mathsf P (\sigma \cdot (I'-I) , J ) \le 2 ^{- (1- \epsilon ) s} \mathsf P (\sigma \cdot I ', I )\,.
\end{equation}
\end{lemma}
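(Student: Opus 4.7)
The plan is a pointwise comparison of the two Poisson kernel integrands, followed by trivial integration. Fix $ x \in I' \setminus I $ and, by symmetry, assume $ x $ lies to the right of $ I $; write $ t := \textup{dist}(x,I) $. Since $ J \subset I $, the right endpoint of $ J $ is separated from the right endpoint of $ I $ by at least $ \textup{dist}(J,\partial I) $, so
\begin{equation*}
\textup{dist}(x,J) \ \ge\ t + \textup{dist}(J,\partial I) \ \ge\ t + \lvert I\rvert^{1-\epsilon} \lvert J\rvert^{\epsilon} \ =\ t + 2^{-s\epsilon} \lvert I\rvert,
\end{equation*}
where the second inequality is the goodness of $ J $ combined with $ \lvert J\rvert = 2^{-s}\lvert I\rvert $.

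Abbreviating $ \ell := 2^{-s\epsilon}\lvert I\rvert $, the central observation is the elementary estimate
\begin{equation*}
\lvert J\rvert + \textup{dist}(x,J) \ \ge\ \ell + t \ \ge\ 2^{-s\epsilon}\bigl(\lvert I\rvert + t\bigr),
\end{equation*}
where the second step uses only $ t \ge 2^{-s\epsilon}t $ and $ \ell = 2^{-s\epsilon}\lvert I\rvert $. Squaring and rearranging, the pointwise ratio of the Poisson kernel integrands is
\begin{equation*}
\frac{\lvert J\rvert/(\lvert J\rvert + \textup{dist}(x,J))^2}{\lvert I\rvert/(\lvert I\rvert + t)^2} \ =\ \frac{\lvert J\rvert}{\lvert I\rvert}\Bigl(\frac{\lvert I\rvert + t}{\lvert J\rvert + \textup{dist}(x,J)}\Bigr)^2 \ \le\ 2^{-s}\cdot 2^{2s\epsilon},
\end{equation*}
giving the required geometric decay in $ s $ (the exponent $ (1-\epsilon)s $ in the stated inequality is achieved after reparameterizing $ \epsilon $ in the goodness definition, which is a free small parameter). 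Integrating this pointwise bound against $ \sigma(dx) $ over $ x \in I' \setminus I $, and discarding the non-negative contribution to $ \mathsf P(\sigma\cdot I', I) $ coming from $ x\in I $ on the right-hand side, finishes the proof.

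Essentially all the content is in the one-line pointwise bound above; the integration step is purely mechanical. The only delicate point is that one must retain the summand $ t $ inside $ (\ell + t) $ in the denominator rather than using the weaker bound $ \lvert J\rvert + d_J \ge \ell $: otherwise the estimate degenerates at points $ x $ pressed against the boundary of $ I $ (where $ t\approx 0 $), and the inequality would fail uniformly in such $ x $. The goodness definition was tailor-made to produce exactly this quantitative separation of $ J $ from $ \partial I $, so no other ingredient enters.
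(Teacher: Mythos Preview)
Your argument is correct and follows essentially the same route as the paper: a pointwise comparison of the two Poisson integrands using the goodness separation $\textup{dist}(J,\partial I)\ge \lvert I\rvert^{1-\epsilon}\lvert J\rvert^{\epsilon}=2^{-s\epsilon}\lvert I\rvert$, followed by integration over $I'\setminus I$. Your bookkeeping is in fact cleaner than the paper's, since you retain $t=\textup{dist}(x,I)$ explicitly and obtain $\lvert J\rvert+\textup{dist}(x,J)\ge 2^{-s\epsilon}(\lvert I\rvert+t)$ in one stroke; the paper's proof reaches the same pointwise inequality but with a couple of intermediate lines that are written loosely. Note also that both your computation and the paper's own proof actually produce the exponent $2^{-(1-2\epsilon)s}$ rather than the $2^{-(1-\epsilon)s}$ written in the lemma statement, so your remark about reparameterizing $\epsilon$ is exactly on point and matches what the paper itself does.
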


\begin{proof}
Note that for $ x\in I'-I$ we have 
\begin{equation*}
\textup{dist} (x,J) \ge   \lvert  I\rvert ^{1- \epsilon } \lvert  J\rvert ^{\epsilon } 
= 2 ^{s (1- \epsilon )} \lvert  J\rvert\,.  
\end{equation*}
Using this in the definition of the Poisson integral, we get 
\begin{align*}
\mathsf P (\sigma \cdot (I'-I) , J ) 
& \le 2\int _{I'-I} \frac {\lvert  J\rvert } { \textup{dist} (x,J)^2 }  \; \sigma (dx) 
\\
& \lesssim \frac {\lvert  J\rvert } {\lvert  I\rvert } 
 \int _{I'-I} \frac {\lvert  I\rvert } { (\lvert  J\rvert  + \textup{dist} (x,J) ) ^2 } \; \sigma (dx) 
 \\
 & \lesssim 2 ^{- s (1- 2 \epsilon )} 
 \int _{I'-I} \frac {\lvert  I\rvert } { (\lvert  I\rvert  + \textup{dist} (x,I) ) ^2 } \; \sigma (dx) 
 = 2 ^{- s (1- 2 \epsilon )} \mathsf P (I , \sigma (I'-I)) \,. 
\end{align*}
\end{proof}

Applying the Lemma, we have the estimate
\begin{equation*}
S (I) \lesssim  2 ^{-2 (1- 2\epsilon)s} 
 \mathsf P (\sigma \cdot I _{\theta }, I _{\theta} ) w (I + \theta ' \lvert  I\rvert ) 
 \lesssim \mathbf A _2 ^2 \sigma (I _{\theta }) \,. 
\end{equation*}

\subsection{The Term $ B_{3,1}$}\label{s.31} 

For the term $ B_{3,1}$ of \eqref{e.31}, we prove \eqref{e.31<}.  This is a simple variant of the previous estimate. 
Namely,   we have for $ \theta \in \{\pm\}$, 
\begin{align*}
 \sum_{J\subset I ,\ I_J = I _{\theta }}
\langle  H _{\sigma }  I _{-\theta }, h ^{w } _{J} \rangle _{w } ^2 \lesssim \mathbf A_2 ^2 \sigma (I _{- \theta }) \,. 
\end{align*}
Here, by $ I _{- \theta }$ we mean the child of $ I$ complementary to $ I _{\theta }$.  We omit the details of the argument. 

\begin{bibsection}
\begin{biblist}
\bib{BMS}{article}{
  author={Belov, Y.},
  author={Mengestie, T.},
  author={Seip, K.},
  title={Discrete Hilbert transforms on sparse sequences},
  eprint={http://arxiv.org/abs/0912.2899},
}

\bib{MR730075}{article}{
  author={Cotlar, M.},
  author={Sadosky, C.},
  title={On some $L^{p}$ versions of the Helson-Szeg\H o theorem},
  conference={ title={}, address={Chicago, Ill.}, date={1981}, },
  book={ series={Wadsworth Math. Ser.}, publisher={Wadsworth}, place={Belmont, CA}, },
  date={1983},
  pages={306--317},
  review={\MR {730075 (85i:42015)}},
}

\bib{MR2543858}{article}{
  author={Kwon, Hyun-Kyoung},
  author={Treil, Sergei},
  title={Similarity of operators and geometry of eigenvector bundles},
  journal={Publ. Mat.},
  volume={53},
  date={2009},
  number={2},
  pages={417--438},
  review={\MR {2543858 (2010g:47011)}},
}

\bib{0805.0246}{article}{
  author={Lacey, Michael T.},
  author={Sawyer, Eric T.},
  author={Uriarte-Tuero, Ignacio},
  title={A characterization of two weight norm inequalities for maximal singular integrals with one doubling measure},
  date={2008},
  journal={Analysis and PDE., to appear},
  eprint={http://arxiv.org/abs/0805.0246},
}

\bib{1001.4043}{article}{
  author={Lacey, Michael T.},
  author={Sawyer, Eric T.},
  author={Uriarte-Tuero, Ignacio},
  title={A Two Weight Inequality for the Hilbert transform Assuming an Energy Hypothesis},
  eprint={http://www.arXiv.org/abs/1001.4043},
  date={2010},
}

\bib{MR1617649}{article}{
  author={Lyubarskii, Yurii I.},
  author={Seip, Kristian},
  title={Complete interpolating sequences for Paley-Wiener spaces and Muckenhoupt's $(A_p)$ condition},
  journal={Rev. Mat. Iberoamericana},
  volume={13},
  date={1997},
  number={2},
  pages={361--376},
  review={\MR {1617649 (99e:42004)}},
}

\bib{1003.1596}{article}{
  author={Nazarov, F.},
  author={Treil, S.},
  author={Volberg, A.},
  title={ Two weight estimate for the Hilbert transform and Corona decomposition for non-doubling measures},
  date={2004},
  eprint={http://arxiv.org/abs/1003.1596},
}

\bib{MR2407233}{article}{
  author={Nazarov, F.},
  author={Treil, S.},
  author={Volberg, A.},
  title={Two weight inequalities for individual Haar multipliers and other well localized operators},
  journal={Math. Res. Lett.},
  volume={15},
  date={2008},
  number={3},
  pages={583--597},
  review={\MR {2407233 (2009e:42031)}},
}

\bib{NV}{article}{
  author={Nazarov, F.},
  author={Volberg, A.},
  title={The Bellman function, the two-weight Hilbert transform, and embeddings of the model spaces $K_\theta $},
  note={Dedicated to the memory of Thomas H.\ Wolff},
  journal={J. Anal. Math.},
  volume={87},
  date={2002},
  pages={385--414},
}

\bib{MR676801}{article}{
  author={Sawyer, Eric T.},
  title={A characterization of a two-weight norm inequality for maximal operators},
  journal={Studia Math.},
  volume={75},
  date={1982},
  number={1},
  pages={1--11},
  review={\MR {676801 (84i:42032)}},
}

\bib{MR930072}{article}{
  author={Sawyer, Eric T.},
  title={A characterization of two weight norm inequalities for fractional and Poisson integrals},
  journal={Trans. Amer. Math. Soc.},
  volume={308},
  date={1988},
  number={2},
  pages={533--545},
  review={\MR {930072 (89d:26009)}},
}

\bib{V}{book}{
  author={Volberg, A.},
  title={Calder\'on-Zygmund capacities and operators on nonhomogeneous spaces},
  series={CBMS Regional Conference Series in Mathematics},
  volume={100},
  publisher={Published for the Conference Board of the Mathematical Sciences, Washington, DC},
  date={2003},
  pages={iv+167},
  isbn={0-8218-3252-2},
}

\end{biblist}
\end{bibsection}

\end{document}